\newtheorem{theo}{Theorem}
\newtheorem{lemm}{Lemma}
\newtheorem{prop}{Proposition}
\theoremstyle{definition}
\newtheorem{mydef}{Definition} 
\theoremstyle{definition}
\numberwithin{equation}{section}
\title{Adaptive energy preserving methods for\\ partial differential equations}
\author{Sølve Eidnes \and Brynjulf Owren \and Torbjørn Ringholm}
\author{Torbjørn Ringholm, Brynjulf Owren, Sølve Eidnes}
\begin{document}
\maketitle

\begin{abstract}
\noindent A framework for constructing integral preserving numerical schemes for time-dependent partial differential equations on non-uniform grids is presented. The approach can be used with both finite difference and partition of unity methods, thereby including finite element methods. The schemes are then extended to accommodate $r$-, $h$- and $p$-adaptivity. To illustrate the ideas, the method is applied to the Korteweg--de Vries equation and the sine-Gordon equation. Results from numerical experiments are presented.
\end{abstract}

\section{Introduction}

Courant, Friedrichs and Lewy introduced difference schemes with conservation properties in \cite{courant28udp}, where a discrete conservation law for a finite difference approximation of the wave equation was derived. Their methods are often called energy methods \cite{furihata01fds} or energy-conserving methods \cite{li95fdc}, although the conserved quantity is often not energy in the physical sense. The primary motivation for developing conservative methods was originally to devise a norm that could guarantee global stability. This was still an objective, in addition to proving existence and uniqueness of solutions, when the energy methods garnered newfound interest in the 1950s and 1960s, resulting in new developments such as generalizations of the methods and more difference schemes, summarized by Richtmyer and Morton in \cite{richtmyer67dmf}.
In the 1970s, the motivation behind studying schemes that preserve invariant quantities changed, as the focus shifted to the conservation property itself. Li and Vu-Quoc presented in \cite{li95fdc} a historical survey of conservative methods developed up to the early 1990s. They state that this line of work is motivated by the fact that in some situations, the success of a numerical solution will depend on its ability to preserve one or more of the invariant properties of the original differential equation. In addition, as noted in \cite{christiansen11tis,hairer06gni}, there is the general idea that transferring more of the properties of the original continuous dynamical system over to a discrete dynamical system may lead to a more accurate numerical approximation of the solution, especially over long time intervals.
In recent years, there has been a greater interest in developing systematic techniques applicable to larger classes of differential equations. Hairer, Lubich and Wanner give in \cite{hairer06gni} a presentation of geometric integrators for differential equations, i.e. methods for solving ordinary differential equations (ODEs) that preserve a geometric structure of the system. Examples of such geometric structures are symplectic structures, symmetries, reversing symmetries, isospectrality, Lie group structure, orthonormality, first integrals, and other  invariants, such as volume and invariant measure.

In this paper we will be concerned with the preservation of first integrals of PDEs. From the ODE literature we find that the most general methods for preserving first integrals are tailored schemes, in the sense that the vector field of the ODE does not by itself provide sufficient information, so the schemes make explicit use of the first integral. An obvious approach in this respect is projection, where the solution is first advanced using any consistent numerical scheme and then this approximation is projected  onto the appropriate level set of the invariant.
In the same class of tailored methods one also has the discrete gradient methods, 
usually attributed to  Gonzalez  \cite{gonzalez96tia}.  For the subclass of canonical Hamiltonian systems, the energy can be preserved by means of a general purpose method called the averaged vector field method, see e.g. \cite{quispel08anc}.

The notion of discrete gradient methods for ordinary differential equations has a counterpart for partial differential equations called the discrete variational derivative method. Such schemes have been developed since the late 1990s in a number of articles by Japanese researchers such as Furihata, Matsuo, Sugihara, and Yaguchi. A relatively recent account of this work can be found in the monograph \cite{furihata11dvd}.
More recently, the development of integral preserving schemes for PDEs has been systematised and eased, in particular by using the aforementioned tools from ordinary differential equations, see for instance \cite{celledoni12per,dahlby11agf}.
Most of the schemes one finds in the literature are based on a finite difference approach, and usually on fixed, uniform grids.
There are however some exceptions.
Yaguchi, Matsuo and Sugihara presented in \cite{yaguchi10aeo, yaguchi12tdv} two different discrete variational derivative methods on fixed, non-uniform grids, specifically defined for certain classes of PDEs. Non-uniform grids are of particular importance for multidimensional problems, since the use of uniform grids will greatly restrict the types of domains possible to discretize. Another important consequence of being able to use non-uniform grids is that it allows for the use of time-adaptive spatial meshes for solving partial differential equations. Adaptive energy preserving schemes for the Korteweg--de Vries and Cahn--Hilliard equations have been developed recently \cite{MM_Miyatake_Matsuo} by Miyatake and Matsuo. The main objective of this paper is to propose a general framework for numerical methods for PDEs that combine mesh adaptivity with first integral conservation.

Several forms of adaptive methods exist; they can roughly be categorized as $r$-, $h$- and $p$-adaptive. When applying $r$-adaptivity, one keeps the number of degrees of freedom constant while modifying the mesh at each time step to e.g. cluster in problematic areas such as boundary layers or to follow wave fronts. When applying the Finite Difference Method (FDM) or the Finite Element Method (FEM), moving mesh methods may be used for $r$-adaptivity, some examples of which may be found in \cite{MM_Huang_Russell,MM_Zegeling,MM_Baines}. When using Partition of Unity Methods (PUM) (and in particular when using FEM), $h$- and $p$-adaptivity relate to adjusting the number of elements and the basis functions used on the elements, respectively.   For PUM methods there exist strategies for $h$- and $p$-adaptivity based both on a priori and a posteriori error analysis \cite{HPFEM_Babuska_Guo}. Common to all of these strategies is that, based on estimated function values in preceding time steps, one can suggest improved discretization parameters for the next time step. In the FDM approach, these discretization parameters consist of the mesh points $\mathbf{x}$, while in the PUM approach the parameters encompass information about both the mesh and the basis functions. We will, in general, denote a collection of discretization parameters by $\mathbf{p}$, and assume that the discretization parameters are changed separately from the degrees of freedom $\mathbf{u}$ of the problem when using adaptive methods. That is, starting with an initial set of discretization parameters $\mathbf{p}^0$ and initial values $\mathbf{u}^0$, one would first decide upon $\mathbf{p}^1$ before calculating $\mathbf{u}^1$, then finding $\mathbf{p}^2$, then $\mathbf{u}^2$, etc., in a decoupled fashion.

A first integral of a PDE is a functional $\mathcal{I}$ on an infinite-dimensional function space, whereas the numerical methods considered here will reduce the problem to a finite-dimensional setting. Therefore, we cannot preserve the exact value of the first integral; instead, we will preserve a consistent approximation to the first integral, $\mathcal{I}_\mathbf{p}(\mathbf{u})$. The approximation will be dependent on the discretization parameters $\mathbf{p}$ and, since adaptivity alters the discretization parameters, we will therefore aim to preserve the value of the approximated first integral across all discretization parameters, i.e. we will require that $\mathcal{I}_{\mathbf{p}^{n+1}}(\mathbf{u}^{n+1})$ = $\mathcal{I}_{\mathbf{p}^{n}}(\mathbf{u}^{n})$. Here, and in the following, superscripts denote time steps unless otherwise specified.  

In this article, we will present a method for developing adaptive numerical schemes that conserve an approximated first integral. In Section 2, the PDE problem is stated, and two classes of first integral preserving methods using arbitrary yet constant discretization parameters are presented; one using an FDM approach and the other a PUM approach for spatial discretization. A connection to previously existing methods is then established. In Section 3, we present a way of adding adaptivity to the methods from Section 2 and the modifications needed to retain the first integral preservation property, before showing that certain projection methods form a subclass of the methods thus obtained. Section 4 contains examples of the application of the methods to two PDEs and numerical results pertaining to the quality of the numerical solutions as compared to a standard implicit method.

\section{Spatial discretization with fixed mesh}

\subsection{Problem statement}
Consider a partial differential equation
\begin{align}
u_t = f(\mathbf{x},u^J), \qquad \mathbf{x} \in \Omega \subseteq \mathbb{R}^d,\quad u \in \mathcal{B} \subseteq L^2, \label{eq:pde_pure}
\end{align}
where $u^J$ denotes $u$ itself and its partial derivatives of any order with respect to the spatial variables $x_1,....,x_d$. We shall not specify the space $\mathcal{B}$ further, but assume that it is sufficiently regular to allow all operations used in the following. For ease of reading, all $t$-dependence will be suppressed in the notation wherever it is irrelevant. Also, from here on, square brackets are used to denote dependence on a function and its partial derivatives of any order with respect to the independent variables $t$ and $ x_1,...,x_d$. We recall the definition of the \textit{variational derivative} of a functional $H[u]$ as the function $\frac{\delta H}{\delta u}[u]$ satisfying
\begin{align}
\left\langle \dfrac{\delta H}{\delta u}[u], v \right\rangle_{L^2} = \dfrac{\mathrm{d}}{\mathrm{d}\epsilon} \bigg |_{\epsilon = 0} H[u+\epsilon v] \quad \forall v \in \mathcal{B},\label{eq:Idef}
\end{align}
and define a \textit{first integral} of (\ref{eq:pde_pure}) to be a functional $\mathcal{I}[u]$ satisfying
\begin{align*}
\left\langle \dfrac{\delta \mathcal{I}}{\delta u}[u], f(\mathbf{x},u^J) \right\rangle_{L^2}  = 0, \quad \forall u \in \mathcal{B}.
\end{align*}
We may observe that $\mathcal{I}[u]$ is preserved over time, since this implies
\begin{align*}
\dfrac{\mathrm{d}\mathcal{I}}{\mathrm{d}t} = \left\langle \dfrac{\delta \mathcal{I}}{\delta u}[u], \dfrac{\partial u}{\partial t}\right\rangle_{L^2} = 0.
\end{align*}
Furthermore, we may observe that if there exists some operator $S(\mathbf{x},u^J)$, skew-symmetric with respect to the $L^2$ inner product, such that 
\begin{align*}
f(\mathbf{x},u^J) = S(\mathbf{x},u^J)\dfrac{\delta \mathcal{I}}{\delta u}[u],
\end{align*}
then $\mathcal{I}[u]$ is a first integral of (\ref{eq:pde_pure}), and we can state (\ref{eq:pde_pure}) in the form
\begin{align}
u_t = S(\mathbf{x}, u^J)\dfrac{\delta \mathcal{I}}{\delta u}[u]. \label{eq:pde_var}
\end{align}
This can be considered as the PDE analogue of an ODE with a first integral, in which case we have a system
\begin{align}
\dfrac{\mathrm{d}\mathbf{u}}{\mathrm{d}t} = S(\mathbf{u})\nabla_\mathbf{u} I(\mathbf{u}), \label{eq:ODE_form}
\end{align}
where $S(\mathbf{u})$ is a skew-symmetric matrix \cite{mclachlan99giu}. The gradient is defined as usual, but for clarity in later use we have added a subscript to specify that it is a vector of partial derivatives with respect to the coordinates of $\mathbf{u}$. Note that Hamiltonian equations are contained of this class of ODEs. For such differential equations, there exist numerical methods preserving the first integral $I(\mathbf{u})$, for instance the discrete gradient methods, which are of the form
\begin{align*}
\dfrac{\mathbf{u}^{n+1} - \mathbf{u}^n}{\Delta t} = \bar{S}(\mathbf{u}^n,\mathbf{u}^{n+1}) \overline{\nabla}I(\mathbf{u}^n,\mathbf{u}^{n+1}),
\end{align*}
where $\bar{S}(\mathbf{u}^{n},\mathbf{u}^{n+1})$ is a consistent skew-symmetric time-discrete approximation to $S(\mathbf{u})$ and $\overline{\nabla}I(\mathbf{v},\mathbf{u})$ is a discrete gradient of $I(\mathbf{u})$, i.e. a function satisfying
\begin{align}
(\overline{\nabla}I(\mathbf{v},\mathbf{u}))^T(\mathbf{u} - \mathbf{v}) &= I(\mathbf{u}) - I(\mathbf{v}),\label{eq:DG1}\\
\overline{\nabla}I(\mathbf{u},\mathbf{u}) &= \nabla_\mathbf{u} I(\mathbf{u}).\label{eq:DG2}
\end{align}
There are several possible choices of discrete gradients available, one of which is the Average Vector Field (AVF) discrete gradient \cite{celledoni12per}, given by
\begin{align*}
\overline{\nabla}I(\mathbf{v},\mathbf{u}) = \int \limits_0^1 \nabla_\mathbf{u} I(\xi \mathbf{u} + (1 - \xi)\mathbf{v}) \mathrm{d}\xi,
\end{align*}
which will be used for numerical experiments in the final chapter. Our approach to solving (\ref{eq:pde_pure}) on non-uniform grids is based upon considering the PDE in the form (\ref{eq:pde_var}), reducing it to a system of ODEs of the form (\ref{eq:ODE_form}) and applying a discrete gradient method. This is done by finding a discrete approximation $\mathcal{I}_{\mathbf{p}}$ to $\mathcal{I}$ and using this to obtain a discretization in the spatial variables, which is achieved through either a finite difference approach or a variational approach.

\subsection{Finite difference method}
\label{sec:FDM}
In the finite difference approach, we restrict ourselves to obtaining approximate values of $u$ at the grid points $\mathbf{x}_0, ... , \mathbf{x}_M$, which can be interpreted as quadrature points with some associated nonzero quadrature weights $\kappa_0, ..., \kappa_M$. The grid points constitute the discretization parameters $\mathbf{p}$. We can then approximate the $L^2$ inner product by quadrature to arrive at a weighted inner product:
\begin{align*}
\left\langle u,v \right\rangle_{L^2} = \int \limits_{\Omega}u(\mathbf{x})v(\mathbf{x}) \mathrm{d}x \simeq \sum\limits_{i=0}^{M}\kappa_i u(\mathbf{x}_i)v(\mathbf{x}_i) = \mathbf{u}^T D(\kappa) \mathbf{v} = \left\langle \mathbf{u},\mathbf{v} \right\rangle_{\kappa},
\end{align*}
where $D(\kappa) = \mathrm{diag}(\kappa_0,...,\kappa_M)$. Assume that there exists a consistent approximation $\mathcal{I}_{\mathbf{p}}(\mathbf{u})$ to the functional $\mathcal{I}[u]$, dependent on the values of $u$ at the points $\mathbf{x}_i$. Then, we can characterize the discretized variational derivative by asserting that 
\begin{align*}
\left\langle \dfrac{\delta \mathcal{I}_{\mathbf{p}}}{\delta \mathbf{u}}(\mathbf{u}), \mathbf{v} \right\rangle_{\kappa} = \dfrac{\mathrm{d}}{\mathrm{d}\epsilon} \bigg |_{\epsilon = 0} \mathcal{I}_{\mathbf{p}}(\mathbf{u}+\epsilon \mathbf{v}) \quad \forall \mathbf{v} \in \mathbb{R}^{M+1} ,
\end{align*}
meaning
\begin{align*}
\left(\dfrac{\delta \mathcal{I}_{\mathbf{p}}}{\delta \mathbf{u}}(\mathbf{u})\right)^T D(\kappa) \mathbf{v} = (\nabla_\mathbf{u} \mathcal{I}_{\mathbf{p}} (\mathbf{u}))^T \mathbf{v} \quad \forall \mathbf{v} \in \mathbb{R}^{M+1} ,
\end{align*}
from which we conclude that
\begin{align}
\dfrac{\delta \mathcal{I}_{\mathbf{p}}}{\delta \mathbf{u}}(\mathbf{u}) = D(\kappa)^{-1} \nabla_\mathbf{u} \mathcal{I}_{\mathbf{p}} (\mathbf{u}).\label{eq:vdg}
\end{align}
Using this as a discretization of $\frac{\delta \mathcal{I}}{\delta u}[u]$ and approximating $S(\mathbf{x}, u^J)$ by a matrix $S_{d}(\mathbf{u})$, skew-symmetric with respect to $\langle \cdot,\cdot \rangle_\kappa$, we obtain a discretization of (\ref{eq:pde_var}) as:
\begin{align}
\dfrac{\mathrm{d}\mathbf{u}}{\mathrm{d}t} = S_{\mathbf{p}}(\mathbf{u}) \nabla_\mathbf{u} \mathcal{I}_{\mathbf{p}}(\mathbf{u}),
\label{eq:ODEs}
\end{align}
where $S_{\mathbf{p}}(\mathbf{u}) = S_{d}(\mathbf{u})D(\kappa)^{-1} $. This system of ODEs is of the form (\ref{eq:ODE_form}), since 
\begin{align*}
S_{\mathbf{p}}(\mathbf{u})^T &= (S_{d}(\mathbf{u})D(\kappa)^{-1})^T\\
 &= D(\kappa)^{-1} S_{d}(\mathbf{u})^T D(\kappa) D(\kappa)^{-1}\\
& = - D(\kappa)^{-1} D(\kappa) S_{d}(\mathbf{u}) D(\kappa)^{-1}\\
 & = -S_{d}(\mathbf{u}) D(\kappa)^{-1} \\
 &= -S_{\mathbf{p}}(\mathbf{u}).
\end{align*}
This allows us to apply first integral preserving methods for systems of ODEs to solve the spatially discretized system. For example, we may consider using a discrete gradient $\overline{\nabla}\mathcal{I}_{\mathbf{p}}$, and a skew-symmetric, time-discrete approximation $S_{\mathbf{p}}(\mathbf{u}^{n},\mathbf{u}^{n+1})$ to $ S_{\mathbf{p}}(\mathbf{u})$, where $\mathbf{u}^n = \mathbf{u}(t_n)$, $t_n = n \Delta t$. Then, the following scheme will preserve the approximated first integral $\mathcal{I}_{\mathbf{p}}$ in the sense that $\mathcal{I}_{\mathbf{p}}(\mathbf{u}^{n+1}) = \mathcal{I}_{\mathbf{p}}(\mathbf{u}^n)$:
\begin{align}
\dfrac{\mathbf{u}^{n+1} - \mathbf{u}^n}{\Delta t} = S_{\mathbf{p}}(\mathbf{u}^{n},\mathbf{u}^{n+1}) \overline{\nabla}\mathcal{I}_{\mathbf{p}}(\mathbf{u}^{n},\mathbf{u}^{n+1}). \label{eq:DG}
\end{align}

\subsection{Partition of unity method}
\label{sec:PUM}
One may also approach the problem of spatially discretizing the PDE through the use of variational methods such as the Partition of Unity Method (PUM) \cite{PUM_Melenk_CMAM}, which generalizes the Finite Element Method (FEM). Here, the variational structure of the functional derivative can be utilized in a natural way, such that one avoids having to approximate $S(\mathbf{x}, u^J)$. We begin by stating a weak form of (\ref{eq:pde_var}). Then, the problem consists of finding $u \in \mathcal{B}$ such that
\begin{align}
\left\langle u_t,v \right\rangle_{L^2} = \left\langle S(\mathbf{x}, u^J)\dfrac{\delta \mathcal{I}}{\delta u}[u],v \right\rangle_{L^2} = - \left\langle \dfrac{\delta \mathcal{I}}{\delta u}[u],S(\mathbf{x}, u^J) v \right\rangle_{L^2} \quad \forall v \in \mathcal{B}.
\label{eq:weakprob}
\end{align}
Employing a Galerkin formulation, we restrict the search to a finite dimensional subspace $\mathcal{B}^h = \mathrm{span}\{ \varphi_0, ... \varphi_M \} \subseteq \mathcal{B}$, and approximate $u$ by the function
\begin{align*}
u^h(x,t) = \sum_{i=0}^M u_i(t) \varphi_i(x).
\end{align*}
We denote by $\mathbf{p}$ the collection of discretization parameters defining $\mathcal{B}^h$; this includes information about mesh points, element types and shapes of basis functions. Furthermore, we define the canonical mapping $\Phi_{\mathbf{p}}:  \mathbb{R}^{M+1} \rightarrow \mathcal{B}^h $ given by 
\begin{align}
\Phi_{\mathbf{p}}(\mathbf{u}) = \sum_{i=0}^M u_i \varphi_i, \label{eq:phi}
\end{align}
and the discrete first integral $\mathcal{I}_{\mathbf{p}}$ by
\begin{align*}
\mathcal{I}_{\mathbf{p}}(\mathbf{u}) = \mathcal{I}(\Phi_{\mathbf{p}}(\mathbf{u})). 
\end{align*}
The following lemma will prove useful later in the construction of the method:
\begin{lemm} \label{lem:PUM}
For any $u^h, v \in \mathcal{B}^h$,
\begin{align*}
\dfrac{\mathrm{d}}{\mathrm{d}\epsilon}\bigg |_{\epsilon = 0}\mathcal{I}(u^h + \epsilon v)  = (\nabla_\mathbf{u} \mathcal{I}_{\mathbf{p}}(\mathbf{u}))^T \mathbf{v}.
\end{align*}
\end{lemm}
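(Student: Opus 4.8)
The plan is to reduce the claim to the ordinary chain rule by exploiting the linearity of the canonical mapping $\Phi_{\mathbf{p}}$. Since both $u^h$ and $v$ lie in $\mathcal{B}^h = \mathrm{span}\{\varphi_0,\dots,\varphi_M\}$, each admits a unique expansion in the basis functions; that is, there exist coefficient vectors $\mathbf{u}, \mathbf{v} \in \mathbb{R}^{M+1}$ with $u^h = \Phi_{\mathbf{p}}(\mathbf{u})$ and $v = \Phi_{\mathbf{p}}(\mathbf{v})$, and the vector $\mathbf{u}$ appearing on the right-hand side of the statement is precisely the coefficient vector of $u^h$.

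First I would observe that $\Phi_{\mathbf{p}}$, as defined in (\ref{eq:phi}), is linear in its argument, so that $u^h + \epsilon v = \Phi_{\mathbf{p}}(\mathbf{u}) + \epsilon\,\Phi_{\mathbf{p}}(\mathbf{v}) = \Phi_{\mathbf{p}}(\mathbf{u} + \epsilon \mathbf{v})$ for every $\epsilon$. Substituting this into the defining relation $\mathcal{I}_{\mathbf{p}} = \mathcal{I} \circ \Phi_{\mathbf{p}}$ gives $\mathcal{I}(u^h + \epsilon v) = \mathcal{I}(\Phi_{\mathbf{p}}(\mathbf{u} + \epsilon \mathbf{v})) = \mathcal{I}_{\mathbf{p}}(\mathbf{u} + \epsilon \mathbf{v})$, which recasts the left-hand side of the claim as the derivative of the finite-dimensional function $\mathcal{I}_{\mathbf{p}} \colon \mathbb{R}^{M+1} \to \mathbb{R}$ along a straight line.

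The final step is then just the chain rule in $\mathbb{R}^{M+1}$: differentiating $\epsilon \mapsto \mathcal{I}_{\mathbf{p}}(\mathbf{u} + \epsilon \mathbf{v})$ at $\epsilon = 0$ yields the directional derivative of $\mathcal{I}_{\mathbf{p}}$ at $\mathbf{u}$ in the direction $\mathbf{v}$, which by definition equals $(\nabla \mathcal{I}_{\mathbf{p}}(\mathbf{u}))^T \mathbf{v}$. This closes the argument.

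The proof is in essence a change of variables from the function space $\mathcal{B}^h$ to its coordinates under $\Phi_{\mathbf{p}}$, so I expect no genuine obstacle. The only point deserving a word of care is the implicit use of differentiability: one must know that $\epsilon \mapsto \mathcal{I}(u^h + \epsilon v)$ is differentiable at $\epsilon = 0$, equivalently that the G\^ateaux derivative of $\mathcal{I}$ exists along directions in $\mathcal{B}^h$, so that both sides are well-defined. This is guaranteed by the regularity assumed of $\mathcal{B}$ after (\ref{eq:pde_pure}), under which the variational derivative (\ref{eq:Idef}) exists, and it is inherited by $\mathcal{I}_{\mathbf{p}}$ through composition with the smooth (indeed linear) map $\Phi_{\mathbf{p}}$.
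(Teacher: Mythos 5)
Your proof is correct, but it takes a more compressed route than the paper's. Both arguments begin identically, using the linearity of $\Phi_{\mathbf{p}}$ to write $\mathcal{I}(u^h+\epsilon v)=\mathcal{I}(\Phi_{\mathbf{p}}(\mathbf{u}+\epsilon\mathbf{v}))$. From there the paper stays in function space: it applies the chain rule with the variational derivative, obtaining $\bigl\langle \tfrac{\delta\mathcal{I}}{\delta u}[\Phi_{\mathbf{p}}(\mathbf{u})],(\nabla\Phi_{\mathbf{p}}(\mathbf{u}))^T\mathbf{v}\bigr\rangle_{L^2}$, expands this sum term by term, and identifies each term $\bigl\langle \tfrac{\delta\mathcal{I}}{\delta u}[\Phi_{\mathbf{p}}(\mathbf{u})],\tfrac{\partial}{\partial u_i}\Phi_{\mathbf{p}}(\mathbf{u})\bigr\rangle_{L^2}$ as the partial derivative $\tfrac{\partial}{\partial u_i}\mathcal{I}_{\mathbf{p}}(\mathbf{u})$, reassembling the gradient. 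You instead note that $\mathcal{I}\circ\Phi_{\mathbf{p}}=\mathcal{I}_{\mathbf{p}}$ by definition, so after the change of variables the whole statement is the finite-dimensional identity ``directional derivative equals gradient paired with direction,'' and no $L^2$ computation is needed. What your shortcut buys is brevity and transparency; what the paper's longer computation buys is precisely the justification of the step you have to flag separately: the identity $\tfrac{\mathrm{d}}{\mathrm{d}\epsilon}\big|_{\epsilon=0}\mathcal{I}_{\mathbf{p}}(\mathbf{u}+\epsilon\mathbf{v})=(\nabla\mathcal{I}_{\mathbf{p}}(\mathbf{u}))^T\mathbf{v}$ requires that the directional derivative of $\mathcal{I}_{\mathbf{p}}$ exist and be \emph{linear in the direction} $\mathbf{v}$, which does not follow from existence of partial derivatives alone. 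The paper's explicit passage through (\ref{eq:Idef}) supplies exactly this, since the G\^ateaux derivative of $\mathcal{I}$ is represented by an $L^2$ inner product and is therefore linear in $v$; your appeal to the regularity of $\mathcal{B}$ and the linearity of $\Phi_{\mathbf{p}}$ invokes the same ingredient, just without writing it out. So the two proofs rest on the same two facts, and yours is best read as a streamlined version of the paper's argument with the analytic content relegated to a remark.
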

\begin{proof}
\begin{align*}
\dfrac{\mathrm{d}}{\mathrm{d}\epsilon}\bigg |_{\epsilon = 0}\mathcal{I}(u^h + \epsilon v) &= \dfrac{\mathrm{d}}{\mathrm{d}\epsilon}\bigg |_{\epsilon = 0}\mathcal{I}(\Phi_{\mathbf{p}}(\mathbf{u} + \epsilon \mathbf{v})) \\
&= \left\langle \dfrac{\delta \mathcal{I}}{\delta u}[\Phi_{\mathbf{p}}(\mathbf{u} + \epsilon \mathbf{v})] , \dfrac{\mathrm{d}}{\mathrm{d}\epsilon} \Phi_{\mathbf{p}}(\mathbf{u} + \epsilon \mathbf{v}) \right\rangle_{L^2}\bigg |_{\epsilon = 0} \\
&=  \left\langle \dfrac{\delta \mathcal{I}}{\delta u}[\Phi_{\mathbf{p}}(\mathbf{u} + \epsilon \mathbf{v})] , (\nabla_\mathbf{u} \Phi_{\mathbf{p}}(\mathbf{u} + \epsilon \mathbf{v}))^T\mathbf{v} \right\rangle_{L^2}\bigg |_{\epsilon = 0}\\
&=  \left\langle \dfrac{\delta \mathcal{I}}{\delta u}[\Phi_{\mathbf{p}}(\mathbf{u})] , (\nabla_\mathbf{u} \Phi_{\mathbf{p}}(\mathbf{u} ))^T\mathbf{v} \right\rangle_{L^2}\\
&= \sum_{i=0}^M v_i \left\langle \dfrac{\delta \mathcal{I}}{\delta u}[\Phi_{\mathbf{p}}(\mathbf{u})] , \frac{\partial}{\partial u_i}\Phi_{\mathbf{p}}(\mathbf{u} )\right\rangle_{L^2}\\
&= \sum_{i=0}^M v_i \frac{\partial}{\partial u_i}  \mathcal{I}[\Phi_{\mathbf{p}}(\mathbf{u})]
= \sum_{i=0}^M v_i \frac{\partial}{\partial u_i}  \mathcal{I}_{\mathbf{p}}(\mathbf{u}) = (\nabla_\mathbf{u} \mathcal{I}_{\mathbf{p}}(\mathbf{u}))^T \mathbf{v}.
\end{align*}
\end{proof}
\noindent We observe that for $u,v \in \mathcal{B}^h$, the $L^2$ inner product has a discrete counterpart:
\begin{align*}
\left\langle u,v \right\rangle_{L^2} = \sum_{i=0}^M  \sum_{j=0}^M u_i v_j \left\langle \varphi_i,\varphi_j \right\rangle_{L^2} = \mathbf{u}^T A \mathbf{v} = \left\langle \mathbf{u},\mathbf{v}  \right\rangle_{A} 
\end{align*}
with the symmetric positive definite matrix $A$ given by $A_{ij} = \left\langle \varphi_i,\varphi_j \right\rangle_{L^2}$. Note also that equation (\ref{eq:weakprob}) is satisfied in $\mathcal{B}^h$ if it is satisfied for all basis functions $\varphi_j$. The Galerkin form of the problem therefore consists of finding $u_i(t)$ such that
\begin{align}
\sum_{i=0}^M \dfrac{\mathrm{d} u_i}{\mathrm{d} t} \left\langle \varphi_i,\varphi_j \right\rangle_{L^2} = - \left\langle \dfrac{\delta \mathcal{I}}{\delta u}[u^h],S(\mathbf{x}, u^{h,J}) \varphi_j \right\rangle_{L^2} \quad \forall j \in \{0,...,M\}.
\label{eq:galerkin}
\end{align}
This weak form is rather unwieldy and does not give rise to a system of the form (\ref{eq:ODE_form}), so in order to make further progress, we consider the projection of $\frac{\delta \mathcal{I}}{\delta u}[u^h]$ onto $\mathcal{B}^h$:
\begin{align*}
\dfrac{\delta \mathcal{I}}{\delta u}^h[u^h] =\sum_{i=0}^M w_i^h[u^h] \varphi_i(x) =\sum_{i=0}^M w_i(\mathbf{u}) \varphi_i(x),
\end{align*}
where $w_i(\mathbf{u}) = w_i^h[\Phi(\mathbf{u})] = w_i^h[u^h] $ are coefficients that will be characterized later. Replacing $\frac{\delta \mathcal{I}}{\delta u}[u^h]$ by its projection in (\ref{eq:galerkin}) gives the approximate weak form:
\begin{align*}
\sum_{i=0}^M\dfrac{\mathrm{d} u_i}{\mathrm{d} t}\left\langle \varphi_i,\varphi_j \right\rangle_{L^2} = - \sum_{i=0}^M w_i(\mathbf{u})\left\langle \varphi_i,S(\mathbf{x}, u^{h,J}) \varphi_j \right\rangle_{L^2} \quad \forall j \in \{0,...,M\}.
\end{align*}
Thus, we obtain a system of equations for the coefficients $u_i$:
\begin{align}
A \dfrac{\mathrm{d}\mathbf{u}}{\mathrm{d}t}= -B(\mathbf{u}) \mathbf{w}(\mathbf{u}),
\label{eq:FEM_ODE1}
\end{align}
with the skew-symmetric matrix $B(\mathbf{u})$ given by $B(\mathbf{u})_{ji} = \left\langle \varphi_i,S(\mathbf{x}, \Phi(\mathbf{u})^J) \varphi_j \right\rangle_{L^2}$. Furthermore, we may characterize the vector $\mathbf{w}(\mathbf{u})$ by the following argument:
\begin{align*}
\mathbf{w}(\mathbf{u})^TA\mathbf{v} = \left\langle \dfrac{\delta \mathcal{I}}{\delta u}^h[u^h] , v \right\rangle_{L^2} \!\!\!\!\!\! = \left\langle \dfrac{\delta \mathcal{I}}{\delta u} [u^h], v \right\rangle_{L^2} \!\!\!\!\!\! = \dfrac{\mathrm{d}}{\mathrm{d}\epsilon}\bigg |_{\epsilon = 0}\mathcal{I}(u^h + \epsilon v) = (\nabla_\mathbf{u} \mathcal{I}_{\mathbf{p}}(\mathbf{u}))^T \mathbf{v},
\end{align*}
where the last equality holds by Lemma \ref{lem:PUM}. This holds for all $\mathbf{v} \in \mathbb{R}^{M+1}$, and thus
\begin{align}
\mathbf{w}(\mathbf{u}) = A^{-1}\nabla_\mathbf{u} \mathcal{I}_{\mathbf{p}}(\mathbf{u}).
\label{eq:w_characterized}
\end{align}
Inserting (\ref{eq:w_characterized}) into (\ref{eq:FEM_ODE1}) and left-multiplying by $A^{-1}$, we are left with an ODE for the coefficients $u_i$:
\begin{align}
\dfrac{\mathrm{d}\mathbf{u}}{\mathrm{d}t} = S_{\mathbf{p}}(\mathbf{u}) \nabla_\mathbf{u} \mathcal{I}_{\mathbf{p}}(\mathbf{u}). \label{eq:PUM_ODE}
\end{align}
Here, $S_{\mathbf{p}}(\mathbf{u})  = -A^{-1} B(\mathbf{u})  A^{-1}$ is a skew-symmetric matrix, and the system is thereby of the form (\ref{eq:ODE_form}), meaning $\mathcal{I}_{\mathbf{p}}$ can be preserved numerically using e.g. discrete gradient methods as in equation (\ref{eq:DG}). 
\\
\\
\subsection{Discrete variational derivative methods}

Let us now define a general framework for the discrete variational derivative methods that encompass the methods presented by Furihata, Matsuo and coauthors in a number of publications including \cite{furihata01fds, furihata11dvd, yaguchi10aeo, yaguchi12tdv, furihata99fds}.
\begin{mydef}
Let $\mathcal{I}_{\mathbf{p}}$ be a consistent approximation to the functional $\mathcal{I}\left[u\right]$ discretized on $\mathbf{p}$ given by grid points $\mathbf{x}_i$ and quadrature weights $\kappa_i$, $i = 0,...,M$. Then $\frac{\delta \mathcal{I}_\mathbf{p}}{\delta(\mathbf{v},\mathbf{u})}(\mathbf{v},\mathbf{u})$ is a discrete variational derivative of $\mathcal{I}_\mathbf{p}(\mathbf{u})$ 
if it is a continuous function satisfying
\begin{eqnarray}
\left\langle \frac{\delta \mathcal{I}_\mathbf{p}}{\delta(\mathbf{v},\mathbf{u})}, \mathbf{u} - \mathbf{v} \right\rangle_\kappa &=& \mathcal{I}_\mathbf{p}(\mathbf{u}) - \mathcal{I}_\mathbf{p}(\mathbf{v}),
\label{dvdnug1} \\
\frac{\delta \mathcal{I}_\mathbf{p}}{\delta(\mathbf{u},\mathbf{u})} &=& \frac{\delta \mathcal{I}_\mathbf{p}}{\delta\mathbf{u}}\left(\mathbf{u}\right),
\label{dvdnug2}
\end{eqnarray}
and the discrete variational derivative methods for solving PDEs on the form ($\ref{eq:pde_var}$) are given by
\begin{align}
\dfrac{\mathbf{u}^{n+1} - \mathbf{u}^n}{\Delta t} = S_d(\mathbf{u}^{n},\mathbf{u}^{n+1}) \frac{\delta \mathcal{I}_\mathbf{p}}{\delta(\mathbf{u}^{n},\mathbf{u}^{n+1})}, \label{eq:DVDmet}
\end{align}
where $S_d(\mathbf{u}^{n},\mathbf{u}^{n+1})$ is a time-discrete approximation to $S_d(\mathbf{u})$, and itself skew-symmetric with respect to the inner product $\langle \cdot,\cdot \rangle_\kappa$.
\end{mydef}

\begin{prop}
A discrete gradient method (\ref{eq:DG}) applied to the system of ODEs (\ref{eq:ODEs}) or (\ref{eq:PUM_ODE}) is equivalent to a discrete variational derivative method as given by (\ref{eq:DVDmet}), with 
\begin{align*}
S_d(\mathbf{u}^{n},\mathbf{u}^{n+1}) = S_\mathbf{p}(\mathbf{u}^{n},\mathbf{u}^{n+1})D\left(\kappa\right),
\end{align*}
and the discrete variational derivative
\begin{align}
\frac{\delta \mathcal{I}_\mathbf{p}}{\delta (\mathbf{v},\mathbf{u})} = D(\kappa)^{-1}\overline{\nabla} \mathcal{I}_\mathbf{p}(\mathbf{v},\mathbf{u})
\label{dvddg}
\end{align}
satisfying (\ref{dvdnug1})-(\ref{dvdnug2}).
\end{prop}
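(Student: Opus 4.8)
The plan is to argue in two stages. First I would show that, under the stated identifications, the one-step map of the discrete variational derivative method (\ref{eq:DVDmet}) coincides with that of the discrete gradient method (\ref{eq:DG}); then I would verify that the candidate (\ref{dvddg}) genuinely satisfies the defining relations (\ref{dvdnug1})--(\ref{dvdnug2}) of a discrete variational derivative.

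For the equivalence, I would substitute $S_d(\mathbf{u}^n,\mathbf{u}^{n+1}) = S_\mathbf{p}(\mathbf{u}^n,\mathbf{u}^{n+1})D(\kappa)$ and $\frac{\delta\mathcal{I}_\mathbf{p}}{\delta(\mathbf{u}^n,\mathbf{u}^{n+1})} = D(\kappa)^{-1}\overline{\nabla}\mathcal{I}_\mathbf{p}(\mathbf{u}^n,\mathbf{u}^{n+1})$ directly into the right-hand side of (\ref{eq:DVDmet}). The two diagonal factors $D(\kappa)$ and $D(\kappa)^{-1}$ cancel, leaving $S_\mathbf{p}(\mathbf{u}^n,\mathbf{u}^{n+1})\overline{\nabla}\mathcal{I}_\mathbf{p}(\mathbf{u}^n,\mathbf{u}^{n+1})$, which is exactly the right-hand side of (\ref{eq:DG}); hence the two updates agree. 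Before this is legitimate I also need to confirm that the proposed $S_d$ is admissible for (\ref{eq:DVDmet}), i.e. skew-symmetric with respect to $\langle\cdot,\cdot\rangle_\kappa$. This reduces to checking $S_d^T D(\kappa) = -D(\kappa)S_d$, which follows at once from the skew-symmetry $S_\mathbf{p}^T = -S_\mathbf{p}$ established earlier for both (\ref{eq:ODEs}) and (\ref{eq:PUM_ODE}) together with the symmetry of the diagonal matrix $D(\kappa)$.

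To verify (\ref{dvdnug1}), I would insert (\ref{dvddg}) into the weighted inner product and use that $D(\kappa)^{-1}$ is symmetric, so that $\left\langle D(\kappa)^{-1}\overline{\nabla}\mathcal{I}_\mathbf{p}(\mathbf{v},\mathbf{u}),\mathbf{u}-\mathbf{v}\right\rangle_\kappa = (\overline{\nabla}\mathcal{I}_\mathbf{p}(\mathbf{v},\mathbf{u}))^T D(\kappa)^{-1}D(\kappa)(\mathbf{u}-\mathbf{v}) = (\overline{\nabla}\mathcal{I}_\mathbf{p}(\mathbf{v},\mathbf{u}))^T(\mathbf{u}-\mathbf{v})$; the defining discrete gradient property (\ref{eq:DG1}) then equates this with $\mathcal{I}_\mathbf{p}(\mathbf{u}) - \mathcal{I}_\mathbf{p}(\mathbf{v})$, as required. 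For (\ref{dvdnug2}), setting $\mathbf{v} = \mathbf{u}$ and invoking (\ref{eq:DG2}) gives $\frac{\delta\mathcal{I}_\mathbf{p}}{\delta(\mathbf{u},\mathbf{u})} = D(\kappa)^{-1}\nabla\mathcal{I}_\mathbf{p}(\mathbf{u})$, which by (\ref{eq:vdg}) is precisely $\frac{\delta\mathcal{I}_\mathbf{p}}{\delta\mathbf{u}}(\mathbf{u})$.

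I do not expect a genuine obstacle here: the entire argument is algebraic bookkeeping driven by a single mechanism, namely that the factor $D(\kappa)^{-1}$ built into (\ref{dvddg}) is exactly what cancels the weight matrix $D(\kappa)$ concealed in $\langle\cdot,\cdot\rangle_\kappa$, while the diagonality --- hence symmetry --- of $D(\kappa)$ lets every transpose pass through unchanged. The only point demanding attention is careful tracking of where the weight matrix appears. Finally, for the Galerkin system (\ref{eq:PUM_ODE}) the same chain of identities applies verbatim once the symmetric positive definite mass matrix $A$ and its inner product $\langle\cdot,\cdot\rangle_A$ take over the roles of $D(\kappa)$ and $\langle\cdot,\cdot\rangle_\kappa$, so the PUM case needs no separate treatment.
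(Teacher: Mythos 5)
Your core argument is correct and is essentially the paper's own proof: you verify (\ref{dvdnug1}) by letting $D(\kappa)^{-1}$ cancel against the weight matrix inside $\langle\cdot,\cdot\rangle_\kappa$ and invoking (\ref{eq:DG1}), and you verify (\ref{dvdnug2}) by combining (\ref{eq:DG2}) with (\ref{eq:vdg}). The two additional checks you make --- the explicit cancellation $S_\mathbf{p}(\mathbf{u}^n,\mathbf{u}^{n+1})D(\kappa)\,D(\kappa)^{-1}\overline{\nabla}\mathcal{I}_\mathbf{p} = S_\mathbf{p}(\mathbf{u}^n,\mathbf{u}^{n+1})\overline{\nabla}\mathcal{I}_\mathbf{p}$ showing the two updates coincide, and the verification that $S_d = S_\mathbf{p}D(\kappa)$ is skew-symmetric with respect to $\langle\cdot,\cdot\rangle_\kappa$ --- are left implicit in the paper, so spelling them out is a useful completion rather than a deviation.

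The one genuine problem is your final paragraph. The proposition asserts, for \emph{both} (\ref{eq:ODEs}) and (\ref{eq:PUM_ODE}), the identification $S_d = S_\mathbf{p}D(\kappa)$ and the discrete variational derivative $D(\kappa)^{-1}\overline{\nabla}\mathcal{I}_\mathbf{p}$, i.e.\ with the quadrature-weight matrix $D(\kappa)$, not the mass matrix $A$. Your $D(\kappa)$-based computation already covers the PUM case verbatim, because nothing in it uses how $S_\mathbf{p}$ or $\mathcal{I}_\mathbf{p}$ were constructed; all that is needed is that $S_\mathbf{p}(\mathbf{u}^n,\mathbf{u}^{n+1})$ is skew-symmetric, that $\overline{\nabla}\mathcal{I}_\mathbf{p}$ is a discrete gradient of $\mathcal{I}_\mathbf{p}$, and that $D(\kappa)$ is diagonal and invertible. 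Substituting $A$ for $D(\kappa)$ instead proves a different statement: $A$ is not diagonal, so $\langle\cdot,\cdot\rangle_A$ is not an inner product of the form $\langle\cdot,\cdot\rangle_\kappa$ generated by quadrature weights, and the resulting scheme is not a discrete variational derivative method in the sense of Definition 1, nor does it use the $S_d$ stated in the proposition. So the substitution $D(\kappa)\to A$ should be dropped; the PUM case indeed needs no separate treatment, but for the opposite reason to the one you give.
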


\begin{proof}
Applying (\ref{eq:DG1}), we get that, for the discrete variational derivative defined by (\ref{dvddg}),
\begin{align*}
\left\langle \frac{\delta \mathcal{I}_\mathbf{p}}{\delta(\mathbf{v},\mathbf{u})}, \mathbf{u} - \mathbf{v} \right\rangle_\kappa &=  \left\langle D(\kappa)^{-1}\overline{\nabla} \mathcal{I}_\mathbf{p}(\mathbf{v},\mathbf{u}), \mathbf{u} - \mathbf{v} \right\rangle_\kappa \\
&= \left(D(\kappa)^{-1}\overline{\nabla} \mathcal{I}_\mathbf{p} \left(\mathbf{v},\mathbf{u}\right)\right)^{\text{T}} D(\kappa) \left(\mathbf{u} - \mathbf{v}\right)\\
&= \overline{\nabla} \mathcal{I}_\mathbf{p} \left(\mathbf{v},\mathbf{u}\right)^{\text{T}} (\mathbf{u} - \mathbf{v}) = \mathcal{I}_\mathbf{p}(\mathbf{u}) - \mathcal{I}_\mathbf{p}(\mathbf{v}),
\end{align*}
and hence (\ref{dvdnug1}) is satisfied. Furthermore, applying (\ref{eq:DG2}) and (\ref{eq:vdg}),
\begin{equation*}
\frac{\delta \mathcal{I}_\mathbf{p}}{\delta(\mathbf{u},\mathbf{u})} = D(\kappa)^{-1}\overline{\nabla} \mathcal{I}_\mathbf{p}(\mathbf{u},\mathbf{u}) = D(\kappa)^{-1}\nabla_\mathbf{u} \mathcal{I}_\mathbf{p}\left(\mathbf{u}\right) = \frac{\delta{\mathcal{I}_\mathbf{p}}}{\delta\mathbf{u}} \left(\mathbf{u}\right)
\end{equation*}
and (\ref{dvdnug2}) is also satisfied.
\end{proof}

Consequently, all discrete variational derivative methods as given by (\ref{eq:DVDmet}) can be expressed as discrete gradient methods on the system of ODEs (\ref{eq:ODEs}) or (\ref{eq:PUM_ODE}) obtained by discretizing (\ref{eq:pde_var}) in space, and vice versa.

\section{Adaptive discretization}

\subsection{Mapping solutions between parameter sets}
\label{sect:mapping}
Assuming that adaptive strategies are employed, one would obtain a new set of discretization parameters $\mathbf{p}$ at each time step. After such a $\mathbf{p}$ has been found, the solution using the previous parameters must be transferred to the new parameter set before advancing to the next time step. This transfer procedure can be done in either a preserving or a non-preserving manner. Let $\mathbf{p}^n$, $\mathbf{u}^n$, $\mathbf{p}^{n+1}$ and $\mathbf{u}^{n+1}$ denote the discretization parameters and the numerical values obtained at the current time step and next time step, respectively. Also, let $\hat{\mathbf{u}}$ denote the values of $\mathbf{u}^n$ transferred onto $\mathbf{p}^{n+1}$ by whatever means. We call the transfer operation preserving if $\mathcal{I}_{\mathbf{p}^{n+1}}(\hat{\mathbf{u}}) = \mathcal{I}_{\mathbf{p}^n}(\mathbf{u}^n)$. If the transfer is preserving, then the next time step can be taken with a preserving scheme, e.g. the scheme 
\begin{align*}
\dfrac{\mathbf{u}^{n+1} - \hat{\mathbf{u}}}{\Delta t} = S_{\mathbf{p}^{n+1}}(\hat{\mathbf{u}},\mathbf{u}^{n+1}) \overline{\nabla}\mathcal{I}_{\mathbf{p}^{n+1}}(\hat{\mathbf{u}},\mathbf{u}^{n+1}),
\end{align*}
which is preserving in the sense that 
\begin{align*}
\mathcal{I}_{\mathbf{p}^{n+1}}(\mathbf{u}^{n+1}) \! - \! \mathcal{I}_{\mathbf{p}^n}(\mathbf{u}^n) &= \mathcal{I}_{\mathbf{p}^{n+1}}(\mathbf{u}^{n+1}) - \mathcal{I}_{\mathbf{p}^{n+1}}(\hat{\mathbf{u}})\\
&= \left\langle \overline{\nabla}\mathcal{I}_{\mathbf{p}^{n+1}}(\hat{\mathbf{u}},\mathbf{u}^{n+1}), \mathbf{u}^{n+1} \!\! -\! \hat{\mathbf{u}} \right\rangle \\
&= \Delta t \left\langle \overline{\nabla}\mathcal{I}_{\mathbf{p}^{n+1}}(\hat{\mathbf{u}},\mathbf{u}^{n+1}), S_{\mathbf{p}^{n+1}}(\hat{\mathbf{u}},\mathbf{u}^{n+1}) \overline{\nabla}\mathcal{I}_{\mathbf{p}^{n+1}}(\hat{\mathbf{u}},\mathbf{u}^{n+1}) \right\rangle \\
& = 0,
\end{align*}
since $S_{\mathbf{p}^{n+1}}(\hat{\mathbf{u}},\mathbf{u}^{n+1})$ is skew-symmetric. If non-preserving transfer is used, corrections are needed in order to obtain a preserving numerical method. 
\begin{prop}
The scheme
\begin{align}
\mathbf{u}^{n+1}\! = \! \hat{\mathbf{u}}\! - \!\dfrac{(\mathcal{I}_{\mathbf{p}^{n+1}}(\hat{\mathbf{u}}) - \mathcal{I}_{\mathbf{p}^n}(\mathbf{u}^n))\mathbf{z}}{\left\langle \overline{\nabla}\mathcal{I}_{\mathbf{p}^{n+1}}(\hat{\mathbf{u}},\mathbf{u}^{n+1}), \mathbf{z} \right\rangle}\! + \! \Delta t S_{\mathbf{p}^{n+1}}(\hat{\mathbf{u}},\mathbf{u}^{n+1}) \overline{\nabla}\mathcal{I}_{\mathbf{p}^{n+1}}(\hat{\mathbf{u}},\mathbf{u}^{n+1}),
\label{eq:DGMM}
\end{align}
where $\mathbf{z}$ is an arbitrary vector chosen such that $\left\langle \overline{\nabla}\mathcal{I}_{\mathbf{p}^{n+1}}(\hat{\mathbf{u}},\mathbf{u}^{n+1}), \mathbf{z} \right\rangle \neq 0$, is first integral preserving in the sense that $ \mathcal{I}_{\mathbf{p}^{n+1}}(\mathbf{u}^{n\!+\!1})\! - \!\mathcal{I}_{\mathbf{p}^n}(\mathbf{u}^n) = 0$.
\end{prop}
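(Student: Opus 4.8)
The plan is to treat this as a direct verification resting on two ingredients already at hand: the defining secant property (\ref{eq:DG1}) of the discrete gradient and the skew-symmetry of $S_{\mathbf{p}^{n+1}}$, exactly mirroring the preserving-transfer estimate displayed just before the statement. First I would rearrange (\ref{eq:DGMM}) to isolate the increment,
\begin{align*}
\mathbf{u}^{n+1} - \hat{\mathbf{u}} = -\dfrac{(\mathcal{I}_{\mathbf{p}^{n+1}}(\hat{\mathbf{u}}) - \mathcal{I}_{\mathbf{p}^n}(\mathbf{u}^n))\mathbf{z}}{\left\langle \overline{\nabla}\mathcal{I}_{\mathbf{p}^{n+1}}(\hat{\mathbf{u}},\mathbf{u}^{n+1}), \mathbf{z} \right\rangle} + \Delta t\, S_{\mathbf{p}^{n+1}}(\hat{\mathbf{u}},\mathbf{u}^{n+1}) \overline{\nabla}\mathcal{I}_{\mathbf{p}^{n+1}}(\hat{\mathbf{u}},\mathbf{u}^{n+1}),
\end{align*}
and then pair both sides, in the inner product used throughout, with $\mathbf{g} := \overline{\nabla}\mathcal{I}_{\mathbf{p}^{n+1}}(\hat{\mathbf{u}},\mathbf{u}^{n+1})$.

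The key step is to evaluate this pairing term by term. The contribution of the final discrete-gradient-flow term vanishes because $S_{\mathbf{p}^{n+1}}(\hat{\mathbf{u}},\mathbf{u}^{n+1})$ is skew-symmetric, so that $\langle \mathbf{g}, S_{\mathbf{p}^{n+1}}(\hat{\mathbf{u}},\mathbf{u}^{n+1})\, \mathbf{g}\rangle = 0$. The contribution of the correction term collapses because the factor $\langle \mathbf{g}, \mathbf{z}\rangle$ produced by the pairing cancels exactly against the denominator, leaving $-(\mathcal{I}_{\mathbf{p}^{n+1}}(\hat{\mathbf{u}}) - \mathcal{I}_{\mathbf{p}^n}(\mathbf{u}^n))$. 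Thus $\langle \mathbf{g}, \mathbf{u}^{n+1} - \hat{\mathbf{u}}\rangle = -(\mathcal{I}_{\mathbf{p}^{n+1}}(\hat{\mathbf{u}}) - \mathcal{I}_{\mathbf{p}^n}(\mathbf{u}^n))$. I would then apply (\ref{eq:DG1}) with $\mathbf{v} = \hat{\mathbf{u}}$ and $\mathbf{u} = \mathbf{u}^{n+1}$, rewriting the left-hand side as $\mathcal{I}_{\mathbf{p}^{n+1}}(\mathbf{u}^{n+1}) - \mathcal{I}_{\mathbf{p}^{n+1}}(\hat{\mathbf{u}})$; cancelling the common term $\mathcal{I}_{\mathbf{p}^{n+1}}(\hat{\mathbf{u}})$ gives $\mathcal{I}_{\mathbf{p}^{n+1}}(\mathbf{u}^{n+1}) = \mathcal{I}_{\mathbf{p}^n}(\mathbf{u}^n)$, which is the claim.

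The computation is essentially a two-line manipulation, so the main point to watch is well-definedness rather than any analytic obstacle. The scheme (\ref{eq:DGMM}) is implicit, since $\mathbf{u}^{n+1}$ enters the right-hand side through $\overline{\nabla}\mathcal{I}_{\mathbf{p}^{n+1}}$, through $S_{\mathbf{p}^{n+1}}$, and through the scalar denominator; its solvability is a separate question that the preservation identity does not address. Moreover, the cancellation that makes the correction term collapse is legitimate only when the denominator $\langle \mathbf{g}, \mathbf{z}\rangle$ is nonzero, which constrains the otherwise arbitrary vector $\mathbf{z}$ to be non-orthogonal to the discrete gradient. Once these points are granted, the argument is structurally identical to the preserving-transfer estimate preceding the proposition, with the extra correction term engineered precisely so that its pairing against $\mathbf{g}$ reproduces $-(\mathcal{I}_{\mathbf{p}^{n+1}}(\hat{\mathbf{u}}) - \mathcal{I}_{\mathbf{p}^n}(\mathbf{u}^n))$ and thereby absorbs the defect introduced by the non-preserving transfer.
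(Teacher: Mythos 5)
Your proposal is correct and is essentially the paper's own argument: both proofs rest on exactly the two ingredients you identify, the secant property (\ref{eq:DG1}) applied to $\mathcal{I}_{\mathbf{p}^{n+1}}(\mathbf{u}^{n+1})-\mathcal{I}_{\mathbf{p}^{n+1}}(\hat{\mathbf{u}})$ and the skew-symmetry of $S_{\mathbf{p}^{n+1}}$, with the correction term cancelling against the denominator; the paper merely presents the same computation in the reverse direction, starting from $\mathcal{I}_{\mathbf{p}^{n+1}}(\mathbf{u}^{n+1})-\mathcal{I}_{\mathbf{p}^n}(\mathbf{u}^n)$ and collapsing it into a single inner product with $\Delta t\, S_{\mathbf{p}^{n+1}}\overline{\nabla}\mathcal{I}$. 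Your side remarks (implicitness of the scheme, the requirement $\langle \overline{\nabla}\mathcal{I}_{\mathbf{p}^{n+1}}(\hat{\mathbf{u}},\mathbf{u}^{n+1}), \mathbf{z}\rangle \neq 0$) match caveats the paper itself states immediately after the proposition.
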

\begin{proof}
\begin{align*}
 \mathcal{I}_{\mathbf{p}^{n+1}}(\mathbf{u}^{n\!+\!1})\! - \!\mathcal{I}_{\mathbf{p}^n}(\mathbf{u}^n) \! &= \! \mathcal{I}_{\mathbf{p}^{n+1}}(\mathbf{u}^{n+1}) - \mathcal{I}_{\mathbf{p}^{n+1}}(\hat{\mathbf{u}}) + \mathcal{I}_{\mathbf{p}^{n+1}}(\hat{\mathbf{u}}) - \mathcal{I}_{\mathbf{p}^n}(\mathbf{u}^n)\\
 &=  \!\left\langle \overline{\nabla}\mathcal{I}_{\mathbf{p}^{n+1}}(\hat{\mathbf{u}},\mathbf{u}^{n+1}), \mathbf{u}^{n+1} \!\! -\! \hat{\mathbf{u}} \right\rangle + \mathcal{I}_{\mathbf{p}^{n+1}}(\hat{\mathbf{u}}) - \mathcal{I}_{\mathbf{p}^n}(\mathbf{u}^n)\\
 &= \! \left\langle \!\! \overline{\nabla}\mathcal{I}_{\mathbf{p}^{n+1}}(\hat{\mathbf{u}},\mathbf{u}^{n+1}), \mathbf{u}^{n+1} \!\!-\! \hat{\mathbf{u}}\!  + \! \dfrac{(\mathcal{I}_{\mathbf{p}^{n+1}}(\hat{\mathbf{u}}) - \mathcal{I}_{\mathbf{p}^n}(\mathbf{u}^n))\mathbf{z}}{\left\langle \overline{\nabla}\mathcal{I}_{\mathbf{p}^{n+1}}(\hat{\mathbf{u}},\mathbf{u}^{n+1}), \mathbf{z} \right\rangle} \!\! \right\rangle  \!\\
 &= \! \Delta t \left\langle \!\! \overline{\nabla}\mathcal{I}_{\mathbf{p}^{n+1}}(\hat{\mathbf{u}},\mathbf{u}^{n+1}), S_{\mathbf{p}^{n+1}}(\hat{\mathbf{u}},\mathbf{u}^{n+1}) \overline{\nabla}\mathcal{I}_{\mathbf{p}^{n+1}}(\hat{\mathbf{u}},\mathbf{u}^{n+1}) \!\! \right\rangle  \!\\
 & = 0.
\end{align*}
The second equality follows from (\ref{eq:DG1}), the fourth equality from the scheme (\ref{eq:DGMM}), and the last equality follows from the skew-symmetry of $S_{\mathbf{p}^{n+1}}$.
\end{proof}
\noindent The correcting direction $\mathbf{z}$ should be chosen so as to obtain a minimal correction, and such that $\langle \overline{\nabla}\mathcal{I}_{\mathbf{p}^{n+1}}(\hat{\mathbf{u}},\mathbf{u}^{n+1}), \mathbf{z} \rangle \neq 0$. One possibility is simply taking $\mathbf{z} = \overline{\nabla}\mathcal{I}_{\mathbf{p}^{n+1}}(\hat{\mathbf{u}},\mathbf{u}^{n+1})$. In the FDM case one may alternatively choose $\mathbf{z} = D(\kappa)^{-1} \overline{\nabla}\mathcal{I}_{\mathbf{p}^{n+1}}(\hat{\mathbf{u}},\mathbf{u}^{n+1})$, and in the PUM case, $\mathbf{z} = A^{-1} \overline{\nabla}\mathcal{I}_{\mathbf{p}^{n+1}}(\hat{\mathbf{u}},\mathbf{u}^{n+1})$.

When using the PUM formulation, one may obtain a method for preserving transfer in the following manner. Any changes through e.g. $r$- $p$- and/or $h$-refinement between time steps will result in a change in the shape and/or number of basis functions. Denote by $\mathcal{B}^h = \mathrm{span}\{\varphi_i \}_{i=0}^M$ the trial space from the current time step and by $\hat{\mathcal{B}}^h = \mathrm{span}\{\hat{\varphi_i} \}_{i=0}^{\hat{M}}$ the trial space for the next time step, and note that in general, $M \neq \hat{M}$. We do not concern ourselves with how the new basis is found, but simply acknowledge that the basis changes through adaptivity measures as presented in e.g.$\!$ \cite{MM_Huang_Russell} or \cite{HPFEM_Babuska_Guo}. Our task is now to transfer the approximation $u^h$ from $ \mathcal{B}^h $ to $ \hat{\mathcal{B}}^h  $, obtaining an approximation $\hat{u}^h$, while conserving the first integral, i.e. $\mathcal{I}[u^h] = \mathcal{I}[\hat{u}^h]$. This can be formulated as a constrained minimization problem:
\begin{align*}
\min_{\hat{u}^h \in \tilde{\mathcal{B}^h}}||\hat{u}^h - u^h||_{L^2}^2 \quad \text{s.t.} \quad  \mathcal{I}[\hat{u}^h] = \mathcal{I}[u^h].
\end{align*}
We observe that
\begin{align*}
||\hat{u}^h - u^h||_{L^2}^2 &= \sum_{i=0}^{\hat{M}}\sum_{j=0}^{\hat{M}} \hat{u}_i \hat{u}_j\hat{A}_{ij} - 2\sum_{i=0}^{\hat{M}}\sum_{j=0}^M \hat{u}_i u_j^nC_{ij} + \sum_{i=0}^M\sum_{i=0}^M u_i^n u_j^n A_{ij}\\
 &= \mathbf{\hat{u}}^T \hat{A} \mathbf{\hat{u}} - 2 \mathbf{\hat{u}}^T C \mathbf{u}^n + \mathbf{u}^n A \mathbf{u}^n,
\end{align*}
where $A_{ij} = \langle \varphi_i,\varphi_j \rangle_{L^2}$, $\hat{A}_{ij} = \langle \hat{\varphi}_i,\hat{\varphi}_j \rangle_{L^2}$ and $C_{ij} = \langle \hat{\varphi}_i,\varphi_j \rangle_{L^2}$. Also observing that 
\begin{align*}
\mathcal{I}[\hat{u}^h] = \mathcal{I}_{\mathbf{p}^{n+1}}(\mathbf{\hat{u}}), \quad \mathcal{I}[u^h] = \mathcal{I}_{\mathbf{p}^{n}}(\mathbf{u}^n),
\end{align*}
the problem can be reformulated as 
\begin{align*}
\min_{\hat{\mathbf{u}} \in \mathbb{R}^{\hat{M}+1}} \mathbf{\hat{u}}^T \hat{A} \mathbf{\hat{u}} - 2 \mathbf{\hat{u}}^T C \mathbf{u}^n + \mathbf{u}^n A \mathbf{u}^n \quad \text{s.t.} \quad \mathcal{I}_{\mathbf{p}^{n+1}}(\mathbf{\hat{u}}) - \mathcal{I}_{\mathbf{p}^{n}}(\mathbf{u}^n) = 0.
\end{align*}
This is a quadratic minimization problem with one nonlinear equality constraint. Using the method of Lagrange multipliers, we find $\hat{\mathbf{u}}$ as the solution of the nonlinear system of equations
\begin{align*}
\hat{A}\hat{\mathbf{u}} - C \mathbf{u}^n - \lambda \nabla_{\hat{\mathbf{u}}} \mathcal{I}_{\mathbf{p}^{n+1}}(\hat{\mathbf{u}}) &= 0\\
\mathcal{I}_{\mathbf{p}^{n+1}}(\mathbf{\hat{u}}) - \mathcal{I}_{\mathbf{p}^{n}}(\mathbf{u}^n) &= 0,
\end{align*}
which can be solved numerically using a suitable nonlinear solver. 

In general, applicable also in the FDM case, given $\bar{\mathbf{u}}$ obtained by interpolating $\mathbf{u}^n$ onto $\mathbf{p}^{n+1}$ in a non-preserving manner, a preserving transfer operation is obtained by solving the system of equations
\begin{align*}
\hat{\mathbf{u}} - \bar{\mathbf{u}} - \lambda \nabla_{\hat{\mathbf{u}}} \mathcal{I}_{\mathbf{p}^{n+1}}(\hat{\mathbf{u}}) &= 0\\
\mathcal{I}_{\mathbf{p}^{n+1}}(\mathbf{\hat{u}}) - \mathcal{I}_{\mathbf{p}^{n}}(\mathbf{u}^n) &= 0.
\end{align*}

\subsection{Projection methods}

Let the function $f_\mathbf{p} : \mathbb{R}^M \times \mathbb{R}^M \rightarrow \mathbb{R}^M$ be such that
\begin{align}
\frac{\mathbf{u}^{n+1}-\mathbf{u}^n}{\Delta t} = f_\mathbf{p}(\mathbf{u}^{n},\mathbf{u}^{n+1})
\label{eq:arbit}
\end{align}
defines a step from time $t_n$ to time $t_{n+1}$ of any one-step method applied to (\ref{eq:pde_pure}) on the fixed grid represented by the discretization parameters $\mathbf{p}$. Then we define one step of an integral preserving linear projection method $\mathbf{u}^n \mapsto \mathbf{u}^{n+1}$ from $\mathbf{p}^n$ to $\mathbf{p}^{n+1}$ by
\begin{enumerate}
\item Interpolate $\mathbf{u}^n$ onto $\mathbf{p}^{n+1}$ by whatever means to get $\hat{\mathbf{u}}$,
\item Integrate $\hat{\mathbf{u}}$ one time step by computing $\tilde{\mathbf{u}} = \hat{\mathbf{u}} + {\Delta t}f_{\mathbf{p}^{n+1}}\left(\hat{\mathbf{u}},\tilde{\mathbf{u}}\right)$,
\item Compute $\mathbf{u}^{n+1}$ 
by solving the system of $M+1$ equations $\mathbf{u}^{n+1} = \tilde{\mathbf{u}} + \lambda \mathbf{z}$ and $\mathcal{I}_{\mathbf{p}^{n+1}}(\mathbf{u}^{n+1}) = \mathcal{I}_{\mathbf{p}^n}(\mathbf{u}^n)$, for $\mathbf{u}^{n+1} \in \mathbb{R}^M$ and $\lambda \in \mathbb{R}$, where the direction of projection $\mathbf{z}$ is typically an approximation to $\nabla_\mathbf{u} \mathcal{I}_{\mathbf{p}^{n+1}}(\mathbf{u}^{n+1})$.
\end{enumerate}

%
By utilizing the fact that for a method defined by (\ref{eq:arbit}) there exists an implicitly defined map $\Psi_{\mathbf{p}} : \mathbb{R}^M \rightarrow \mathbb{R}^M$ such that $\mathbf{u}^{n+1} = \Psi_{\mathbf{p}}\mathbf{u}^n$, we define
\begin{equation*}
g_\mathbf{p}(\mathbf{u}^n) := \frac{\Psi_{\mathbf{p}}\mathbf{u}^n-\mathbf{u}^{n}}{\Delta t},
\end{equation*}
and may then write the tree points above in an equivalent, more compact form as: Compute $\mathbf{u}^{n+1} \in \mathbb{R}^M$ and $\lambda \in \mathbb{R}$ such that
\begin{eqnarray}
\mathbf{u}^{n+1} - \hat{\mathbf{u}} - \Delta t g_{\mathbf{p}^{n+1}}\left(\hat{\mathbf{u}}\right) - \lambda \mathbf{z} &=& 0,
\label{linproj}\\
\mathcal{I}_{\mathbf{p}^{n+1}}(\mathbf{u}^{n+1}) - \mathcal{I}_{\mathbf{p}^{n}}(\mathbf{u}^n)&=& 0,
\label{preserved}
\end{eqnarray}
where $\hat{\mathbf{u}}$ is $\mathbf{u}^n$ interpolated onto $\mathbf{p}^{n+1}$ by an arbitrary procedure.

The following theorem and proof are reminiscent of Theorem 2 and its proof in \cite{norton13pma}, whose subsequent corollary shows how linear projection methods for solving ODEs are a subset of discrete gradient methods.
\begin{theo}
Let $g_\mathbf{p} : \mathbb{R}^M \rightarrow \mathbb{R}^M$ be a consistent discrete approximation of $f$ in (\ref{eq:pde_pure}) and let $\overline{\nabla}\mathcal{I}_{\mathbf{p}}(\mathbf{u}^{n},\mathbf{u}^{n+1})$ be any discrete gradient of the consistent approximation $\mathcal{I}_\mathbf{p}(\mathbf{u})$ of $\mathcal{I}\left[u\right]$ defined by (\ref{eq:Idef}) on the grid given by discretization parameters $\mathbf{p}$. If we set $S_{\mathbf{p}^{n+1}}$ in (\ref{eq:DGMM}) to be
\begin{equation}
S_{\mathbf{p}^{n+1}}(\hat{\mathbf{u}},\mathbf{u}^{n+1}) = \frac{g_{\mathbf{p}^{n+1}}(\hat{\mathbf{u}})\mathbf{z}^{\text{T}} - \mathbf{z} g_{\mathbf{p}^{n+1}}(\hat{\mathbf{u}})^{\text{T}}}{\left\langle \overline{\nabla}\mathcal{I}_{\mathbf{p}^{n+1}}\left(\hat{\mathbf{u}},\mathbf{u}^{n+1}\right), \mathbf{z} \right\rangle},
\label{eq:Sdef}
\end{equation}
then the linear projection method for solving PDEs on a moving grid, given by (\ref{linproj})-(\ref{preserved}), is equivalent to the discrete gradient method on moving grids, as given by (\ref{eq:DGMM}).
\end{theo}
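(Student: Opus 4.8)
The plan is to substitute the prescribed matrix $S_{\mathbf{p}^{n+1}}$ from (\ref{eq:Sdef}) directly into the discrete gradient update (\ref{eq:DGMM}), simplify using its rank-two structure, and show that what remains is exactly the projection system (\ref{linproj})--(\ref{preserved}), and conversely. Since both schemes determine $\mathbf{u}^{n+1}$ only implicitly (both $S_{\mathbf{p}^{n+1}}$ and $\overline{\nabla}\mathcal{I}_{\mathbf{p}^{n+1}}(\hat{\mathbf{u}},\mathbf{u}^{n+1})$ depend on $\mathbf{u}^{n+1}$), I read ``equivalent'' as equality of the solution sets: every $\mathbf{u}^{n+1}$ produced by one scheme is produced by the other. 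Throughout I assume, as in the nondegeneracy condition imposed on $\mathbf{z}$ after (\ref{eq:DGMM}), that the scalar $d := \langle \overline{\nabla}\mathcal{I}_{\mathbf{p}^{n+1}}(\hat{\mathbf{u}},\mathbf{u}^{n+1}), \mathbf{z}\rangle$ is nonzero.

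For the forward direction I would first record the two facts that make the substitution collapse. Writing $\mathbf{g} = g_{\mathbf{p}^{n+1}}(\hat{\mathbf{u}})$ and abbreviating $\overline{\nabla}\mathcal{I} = \overline{\nabla}\mathcal{I}_{\mathbf{p}^{n+1}}(\hat{\mathbf{u}},\mathbf{u}^{n+1})$, the matrix $S_{\mathbf{p}^{n+1}} = (\mathbf{g}\mathbf{z}^{\text{T}} - \mathbf{z}\mathbf{g}^{\text{T}})/d$ is of the form $\mathbf{a}\mathbf{b}^{\text{T}} - \mathbf{b}\mathbf{a}^{\text{T}}$ up to a scalar, hence manifestly skew-symmetric; therefore the preservation property of scheme (\ref{eq:DGMM}) established earlier applies and guarantees (\ref{preserved}) automatically. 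The key computation is the action $S_{\mathbf{p}^{n+1}}\overline{\nabla}\mathcal{I}$: expanding the outer products and using $\mathbf{z}^{\text{T}}\overline{\nabla}\mathcal{I} = d$ (the inner product being the Euclidean one) gives $S_{\mathbf{p}^{n+1}}\overline{\nabla}\mathcal{I} = \mathbf{g} - (c/d)\mathbf{z}$, where $c := \langle \overline{\nabla}\mathcal{I}, \mathbf{g}\rangle$. Inserting this into (\ref{eq:DGMM}) collects all $\mathbf{z}$-terms and yields $\mathbf{u}^{n+1} = \hat{\mathbf{u}} + \Delta t\,\mathbf{g} + \lambda\mathbf{z}$ with the explicit scalar $\lambda = -\big(\mathcal{I}_{\mathbf{p}^{n+1}}(\hat{\mathbf{u}}) - \mathcal{I}_{\mathbf{p}^n}(\mathbf{u}^n) + \Delta t\,c\big)/d$. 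Recognizing $\hat{\mathbf{u}} + \Delta t\,\mathbf{g}$ as $\hat{\mathbf{u}} + \Delta t\, g_{\mathbf{p}^{n+1}}(\hat{\mathbf{u}})$, this is precisely (\ref{linproj}) for that value of $\lambda$, and (\ref{preserved}) already holds, completing the implication.

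For the converse I would start from a solution $(\mathbf{u}^{n+1},\lambda)$ of (\ref{linproj})--(\ref{preserved}) and verify that the constraint (\ref{preserved}) forces $\lambda$ to take the very value found above, so that (\ref{linproj}) coincides with the collapsed form of (\ref{eq:DGMM}). Here the defining property (\ref{eq:DG1}) of the discrete gradient enters: applied to $\hat{\mathbf{u}}$ and $\mathbf{u}^{n+1}$ it gives $\langle \overline{\nabla}\mathcal{I}, \mathbf{u}^{n+1} - \hat{\mathbf{u}}\rangle = \mathcal{I}_{\mathbf{p}^{n+1}}(\mathbf{u}^{n+1}) - \mathcal{I}_{\mathbf{p}^{n+1}}(\hat{\mathbf{u}})$, and (\ref{preserved}) rewrites the right-hand side as $\mathcal{I}_{\mathbf{p}^n}(\mathbf{u}^n) - \mathcal{I}_{\mathbf{p}^{n+1}}(\hat{\mathbf{u}})$. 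Substituting $\mathbf{u}^{n+1} - \hat{\mathbf{u}} = \Delta t\,\mathbf{g} + \lambda\mathbf{z}$ on the left produces the scalar equation $\Delta t\,c + \lambda d = \mathcal{I}_{\mathbf{p}^n}(\mathbf{u}^n) - \mathcal{I}_{\mathbf{p}^{n+1}}(\hat{\mathbf{u}})$, whose solution reproduces exactly the required $\lambda$.

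I expect the only genuine subtlety, rather than an obstacle, to be the bookkeeping around the implicit dependence on $\mathbf{u}^{n+1}$ and the precise meaning of ``equivalent'': one must keep $S_{\mathbf{p}^{n+1}}$, $\overline{\nabla}\mathcal{I}$, $c$ and $d$ all evaluated at the same argument pair $(\hat{\mathbf{u}},\mathbf{u}^{n+1})$ in both schemes, and invoke $d \neq 0$ to divide. Everything else reduces to two ingredients: the rank-two collapse of $S_{\mathbf{p}^{n+1}}\overline{\nabla}\mathcal{I}$ and a single application of the discrete gradient identity (\ref{eq:DG1}).
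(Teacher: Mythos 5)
Your proposal is correct and takes essentially the same route as the paper's proof: the paper solves for $\lambda$ via the discrete gradient identity (\ref{eq:DG1}) and recognizes the resulting correction term as the outer-product form (\ref{eq:Sdef}) acting on $\overline{\nabla}\mathcal{I}$, which is exactly your ``rank-two collapse'' read in the opposite direction, and it likewise handles the converse by invoking the preservation property of (\ref{eq:DGMM}) and running the algebra backwards. The only difference is presentational — you work out explicitly the direction the paper leaves as ``following the above deduction backwards'' — so the two arguments coincide.
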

\begin{proof}
For better readability, we set $\overline{\nabla}\mathcal{I} := \overline{\nabla}\mathcal{I}_{\mathbf{p}^{n+1}}\left(\hat{\mathbf{u}},\mathbf{u}^{n+1}\right)$. Assume that (\ref{linproj})-(\ref{preserved}) are satisfied. By applying (\ref{preserved}), we get that
\begin{align*}
\mathcal{I}_{\mathbf{p}^n}(\mathbf{u}^n)-\mathcal{I}_{\mathbf{p}^{n+1}}(\hat{\mathbf{u}}) &= \mathcal{I}_{\mathbf{p}^{n+1}}(\mathbf{u}^{n+1})-\mathcal{I}_{\mathbf{p}^{n+1}}(\hat{\mathbf{u}})\nonumber\\
&= \left\langle \overline{\nabla}\mathcal{I}, \mathbf{u}^{n+1} - \hat{\mathbf{u}} \right\rangle\\
&= \Delta t \left\langle \overline{\nabla}\mathcal{I}, g_{\mathbf{p}^{n+1}}(\hat{\mathbf{u}}) \right\rangle + \lambda \left\langle \overline{\nabla}\mathcal{I}, \mathbf{z} \right\rangle,
\end{align*}
and hence
\begin{align}
\lambda = \frac{\mathcal{I}_{\mathbf{p}^n}(\mathbf{u}^n)-\mathcal{I}_{\mathbf{p}^{n+1}}(\hat{\mathbf{u}})}{\left\langle \overline{\nabla}\mathcal{I}, \mathbf{z} \right\rangle} - \Delta t \frac{\left\langle \overline{\nabla}\mathcal{I}, g_{\mathbf{p}^{n+1}}(\hat{\mathbf{u}}) \right\rangle}{\left\langle \overline{\nabla}\mathcal{I}, \mathbf{z} \right\rangle}
\label{eq:lambda}
\end{align}
Substituting this into (\ref{linproj}), we get
\begin{align*}
\mathbf{u}^{n+1}=\hat{\mathbf{u}} + \frac{\mathcal{I}_{\mathbf{p}^n}(\mathbf{u}^n)-\mathcal{I}_{\mathbf{p}^{n+1}}(\hat{\mathbf{u}})}{\left\langle \overline{\nabla}\mathcal{I}, \mathbf{z} \right\rangle} \mathbf{z} + \Delta t \left(g_{\mathbf{p}^{n+1}}\left(\hat{\mathbf{u}}\right) - \frac{\left\langle \overline{\nabla}\mathcal{I}, g_{\mathbf{p}^{n+1}}(\hat{\mathbf{u}}) \right\rangle}{\left\langle \overline{\nabla}\mathcal{I}, \mathbf{z}\right\rangle} \mathbf{z} \right),
\end{align*}
where
\begin{align*}
g_{\mathbf{p}^{n+1}}\left(\hat{\mathbf{u}}\right) - \frac{\left\langle \overline{\nabla}\mathcal{I}, g_{\mathbf{p}^{n+1}}(\hat{\mathbf{u}}) \right\rangle}{\left\langle \overline{\nabla}\mathcal{I}, \mathbf{z} \right\rangle} \mathbf{z} &= \frac{\overline{\nabla}\mathcal{I}^{\text{T}}\mathbf{z} g_{\mathbf{p}^{n+1}}(\hat{\mathbf{u}}) - \overline{\nabla}\mathcal{I}^{\text{T}}g_{\mathbf{p}^{n+1}}(\hat{\mathbf{u}})\mathbf{z}}{\left\langle \overline{\nabla}\mathcal{I}, \mathbf{z} \right\rangle}\\
&= \frac{g_{\mathbf{p}^{n+1}}(\hat{\mathbf{u}})\mathbf{z}^{\text{T}}\overline{\nabla}\mathcal{I} - \mathbf{z} g_{\mathbf{p}^{n+1}}(\hat{\mathbf{u}})^{\text{T}}\overline{\nabla}\mathcal{I}}{\left\langle \overline{\nabla}\mathcal{I}, \mathbf{z} \right\rangle}
\end{align*}
and thus (\ref{eq:DGMM}) is satisfied, with $S_{\mathbf{p}^{n+1}}$ as given by (\ref{eq:Sdef}). Conversely, if $\mathbf{u}^{n+1}$ satisfies (\ref{eq:DGMM}), then (\ref{preserved}) is satisfied. Furthermore, inserting (\ref{eq:Sdef}) into (\ref{eq:DGMM}) and following the above deduction backwards, we get (\ref{linproj}), with $\lambda$ defined by (\ref{eq:lambda}).
\end{proof}

Since (\ref{eq:Sdef}) defines a particular set of choices for $S_{\mathbf{p}^{n+1}}$, the linear projection methods on moving grids constitute a subset of all possible discrete gradient methods on moving grids as defined by (\ref{eq:DGMM}). Note also that, since the linear projection methods are independent of the discrete gradient, each linear projection method defines an equivalence class of the methods (\ref{eq:DGMM}), uniquely defined by the choice of $g_{\mathbf{p}^{n+1}}$.

\subsection{Family of discretized integrals}
\label{sect:fiber}

At the core of the methods considered here is the notion that an approximation to the first integral $\mathcal{I}$ is preserved, and that this approximation is dependent on the discretization parameters which may change from iteration to iteration. That is, we have a family of discretized first integrals $\mathcal{I}_{\mathbf{p}}$, and at each time step the discretized first integral is exchanged for another. For each set of discretization parameters $\mathbf{p}$, there is a corresponding set of degrees of freedom $\mathbf{u}$, in which we search for a $\mathbf{u}$ such that $\mathcal{I}_{\mathbf{p}}(\mathbf{u})$ is preserved. This can be interpreted as a fiber bundle with base space $B$ as the set of all possible discretization parameters $\mathbf{p}$, and fibers $F_\mathbf{p}$ as the sets of all degrees of freedom such that the discretized first integral is equal to the initial discretized first integral, i.e. $F_\mathbf{p} = \{\mathbf{u} \in \mathbb{R}^M | \mathcal{I}_\mathbf{p}(\mathbf{u})  = \mathcal{I}_{\mathbf{p}^0}(\mathbf{u}^0) \}$. A similar idea, although without energy preservation, has been discussed by Bauer, Joshi and Modin in \cite{Bauer_Joshi_Modin}.

\section{Numerical experiments}

\subsection{General remarks on type of experiments made}
To provide examples of the application of our method and to investigate its accuracy, we have applied it to two one-dimensional PDEs: the sine-Gordon equation and the Korteweg--de Vries (KdV) equation. The choice of these equations were made because they both possess traveling wave solutions in the form of solitons, providing an ideal situation for $r$-adaptivity, which allows the grid points to cluster around wave fronts. The following experiments consider $r$-adaptivity only, and not $p$- or $h$-adaptivity. The sine-Gordon equation is solved using the FDM formulation of section \ref{sec:FDM}, while the KdV equation is solved using the PUM formulation of section \ref{sec:PUM}.

We wish to compare our methods to standard methods on fixed and adaptive meshes. This gives us four methods to consider: Fixed mesh methods with energy preservation by discrete gradients (DG), adaptive mesh methods with preservation by discrete gradients (DGMM), a non-preserving fixed grid method (MP), and the same method with adaptive mesh (MPMM). The former two methods are those described earlier in the paper, while the latter two are made differently for the two equations. In the sine-Gordon case, we use a finite difference scheme where spatial discretization is done using central finite differences and time discretization using the implicit midpoint rule. In the KdV case, the spatial discretization is performed the same way as for the discrete gradient schemes, while the time discretization is done using the implicit midpoint rule. The procedure for mesh adaptivity in the DGMM and MPMM schemes is presented in the next subsection.

The MPMM scheme for the sine-Gordon equation appeared unstable unless restrictively short time steps were used, and the results of those tests are therefore omitted from the following discussion. It is difficult to analyze the MPMM scheme and pinpoint an exact cause for this instability. However, it is worth noting that the other three schemes have preservation properties that should contribute to their stability; the DG and DGMM schemes have energy preservation properties, and the semidiscretization used for the sine-Gordon equation gives rise to a Hamiltonian system of equations which means that the MP scheme, which is symplectic, should perform well. On the other hand, the moving mesh strategy used breaks the symplecticity property in the MPMM scheme; specifically, the transfer strategies as presented in the next subsection do not preserve symplecticity. The results using MPMM for the KdV equation were better, and are presented.

\subsection{Adaptivity}
Concerning adaptivity of the mesh, we used a simple method for $r$-adaptivity which can be applied to both FDM and FEM problems in one spatial dimension. When applying moving mesh methods, one can either couple the evolution of the mesh with the PDE to be solved through a Moving Mesh PDE \cite{MMPDE_Huang} or use the rezoning approach, where function values and grid points are calculated in an intermittent fashion. Since our method is based on having a new set of grid points at each time step, and not coupling the evolution of the mesh to the PDE, the latter approach was used. It is based on an equidistribution principle, meaning that when $\Omega = [a,b]$ is split into $M$ intervals, one requires that
\begin{align*}
\int \limits_{x_i}^{x_{i+1}} \omega(x) \mathrm{d}x = \frac{1}{M} \int \limits_{a}^{b} \omega(x) \mathrm{d}x,
\end{align*}
where the monitor function $\omega$ is a function measuring how densely grid points should lie, based on the value of $u$. The choice of monitor function is problem dependent, and choosing it optimally may require considerable research. A variety of monitor functions have been studied for certain classes of problems, see e.g. \cite{BuddHuangRussell, BlomVerwer}. Through numerical experiments, we found little difference in performance when choosing between monitor functions based on arc-length and curvature, and have in the following used the former, that is, the generalized arc-length monitor function \cite{BuddHuangRussell}
\begin{align*}
\omega(x) = \sqrt{1 + k^2\left( \dfrac{\partial u}{\partial x}(x) \right)^2}.
\end{align*}
In this case, the equidistribution principle amounts to requiring that the weighted arc length (in the case $k=1$ one recovers the usual arc length) of $u$ over each interval is equal. In applications, we only have an approximation of $u$, meaning $\omega$ must be approximated as well; in our case, we have applied a finite difference approximation and obtained approximately equidistributing grids using de Boor's method as explained in \cite[pp.~36-38]{MM_Huang_Russell}.
We tried different smoothing techniques, including a direct smoothing of the monitor function and an iterative procedure for the regridding by De Boor's method (see e.g. \cite{MM_Huang_Russell, Pryce, Budd_Huang_Russell_blowup}). In the case of the KdV equation, there was little to no improvement using smoothing, but the sine-Gordon experiments showed significant improvement with direct smoothing; i.e., in De Boor's algorithm, we use the smoothed discretized monitor function
\begin{align*}
\bar{\omega}_i = \frac{\omega_{i-1} + 2 \omega_i + \omega_{i+1}}{4}.
\end{align*}

Having obtained the discretization parameters for the current time step, the numerical solution $\mathbf{u}$ from the previous time step must be transferred onto the new set of mesh points. We tested three different ways of doing this, two of which are using linear interpolation and cubic interpolation. The linear interpolation consists of constructing a function $\hat{u}(x)$ which is piecewise linear on each interval $[x_i^n, x_{i+1}^{n}]$ such that $\hat{u}(x_i^n) = u_i^n$, then evaluating this function at the new mesh points, giving the interpolated values $\hat{u}_i = \hat{u}(x_i^{n+1})$. The cubic interpolation consists of a similar construction, using cubic Hermite splines through the MATLAB function $\mathtt{pchip}$. Of these two transfer methods, the cubic interpolation yielded superior results in all cases, and so only results using cubic interpolation are presented. The third way, using preserving transfer as presented in section \ref{sect:mapping}, applies to the KdV example, where the PUM is used. Here, we found little difference between cubic interpolation and exact transfer, so results are presented using cubic interpolation for the transfer operation here as well. 

\subsection{Sine-Gordon equation}
The sine-Gordon equation is a nonlinear hyperbolic PDE in one spatial and one temporal dimension exhibiting soliton solutions, with applications in predicting dislocations in crystals and propagation of fluxons in junctions between superconductors. It is stated in initial value problem form as:
\begin{align}
&u_{tt} - u_{xx} + \sin(u) = 0, \quad (x,t) \in \mathbb{R} \times [0,T], \label{eq:SG} \\
&u(x,0) = f(x), \quad u_t(x,0) = g(x). \nonumber 
\end{align}
We consider a finite domain $[-L,L] \times [0,T]$ with periodic boundary conditions $u(-L) = u(L)$ and $u_t(-L) = u_t(L)$. The equation has the first integral
\begin{align*}
\mathcal{I}[u] = \int \limits_{\mathbb{R}}\frac{1}{2}u_t^2 + \frac{1}{2}u_x^2 + 1 - \cos(u) \mathrm{d}x .
\end{align*}
Introducing $v = u_t$, (\ref{eq:SG}) can be rewritten as a first-order system of PDEs:
\begin{align*}
\begin{bmatrix}
u_t\\
v_t
\end{bmatrix}
=
\begin{bmatrix}
v\\
u_{xx} - \sin(u)
\end{bmatrix},
\end{align*}
with first integral
\begin{align}
\mathcal{I}[u,v] = \int \limits_{\mathbb{R}}\frac{1}{2}v^2 + \frac{1}{2}u_x^2 + 1 - \cos(u) \mathrm{d}x.
\label{eq:SG_syst}
\end{align}
Finding the variational derivative of this, one can interpret the equation in the form (\ref{eq:pde_var}) with $S$ and $\frac{\delta \mathcal{I}}{\delta u}$ as follows:
\begin{align*}
S = 
\begin{bmatrix}
0 & 1 \\
-1 & 0
\end{bmatrix}, \quad
\dfrac{\delta \mathcal{I}}{\delta u}[u,v] =  
\begin{bmatrix}
\sin(u) - u_{xx} \\
v
\end{bmatrix}.
\end{align*}
We will apply the FDM approach presented in section \ref{sec:FDM}, approximating (\ref{eq:SG_syst}) by some quadrature with points $\{x_i\}_{i=0}^{M}$ and weights $\{\kappa_i\}_{i=0}^{M}$,
\begin{align*}
\mathcal{I}[u,v] \simeq \sum \limits_{i=0}^{M} \kappa_i\left(\frac{1}{2}v_i^2 + \frac{1}{2}u_{x,i}^2 + 1 - \cos(u_i)\right).
\end{align*}
In addition, we approximate the spatial derivatives with central differences. At the endpoints, a periodic extension is assumed, yielding the approximation
\begin{align*}
\mathcal{I}_{\mathbf{p}}(\mathbf{u}) = \sum \limits_{i=0}^{M} \kappa_i \left( \frac{1}{2}v_i^2 + \frac{1}{2}\left(\dfrac{\delta u_i}{\delta x_i} \right)^2 + 1 - \cos(u_i) \right).
\end{align*}
Here, $\delta w_i = w_{i+1} - w_{i-1}$ denotes central difference, with special cases $\delta u_0 = \delta u_{M} = u_{1} - u_{M-1}$, and $\delta x_0 = \delta x_{M} =  x_{1} - x_{0} + x_{M} - x_{M-1}$. Taking the gradient of $\mathcal{I}_{\mathbf{p}}(\mathbf{u})$ and applying the AVF discrete gradient gives
\begin{align*}
&\overline{\nabla}\mathcal{I}_{\mathbf{p}}(\mathbf{u}^n,\mathbf{u}^{n+1}) = \int \limits_{0}^{1} \nabla_\mathbf{u}\mathcal{I}_{\mathbf{p}}(\xi \mathbf{u}^{n} + (1-\xi)\mathbf{u}^{n+1}) \mathrm{d}\xi
\end{align*}
The periodic boundary conditions are enforced by setting $u_0 = u_M$. In the implementation, the $\kappa_i$ were chosen as the quadrature weights associated with the composite trapezoidal rule, i.e.
\begin{align*}
\kappa_0 = \dfrac{x_1 - x_0}{2},  \quad \kappa_{M} = \dfrac{x_{M} - x_{M-1}}{2}, \quad \kappa_i = \dfrac{x_{i+1} - x_{i-1}}{2}, \quad i = 1,...,M-1.
\end{align*}
Furthermore, $S$ was approximated by the matrix
\begin{align*}
S_d =
\begin{bmatrix}
0 & I \\
- I & 0
\end{bmatrix},
\end{align*}
with $I$ an $M \times M$ identity matrix. The exact solution considered was
\begin{align*}
u(x,t) = 4 \tan^{-1}\left( \dfrac{\sinh\left( \dfrac{ct}{\sqrt{1- c^2}} \right)}{c \cosh \left(\dfrac{x}{\sqrt{1-c^2}} \right)} \right). 
\end{align*}
This is a \textit{kink-antikink} system, an interaction between two solitons, each moving in different directions with speed $c \in (0,1)$, resulting in two wave fronts traveling in opposite directions. The wave fronts become steeper as $c \rightarrow 1$. 

\begin{figure}
        \centering
        \subfloat{
                \centering
                \includegraphics[width=0.49\textwidth]{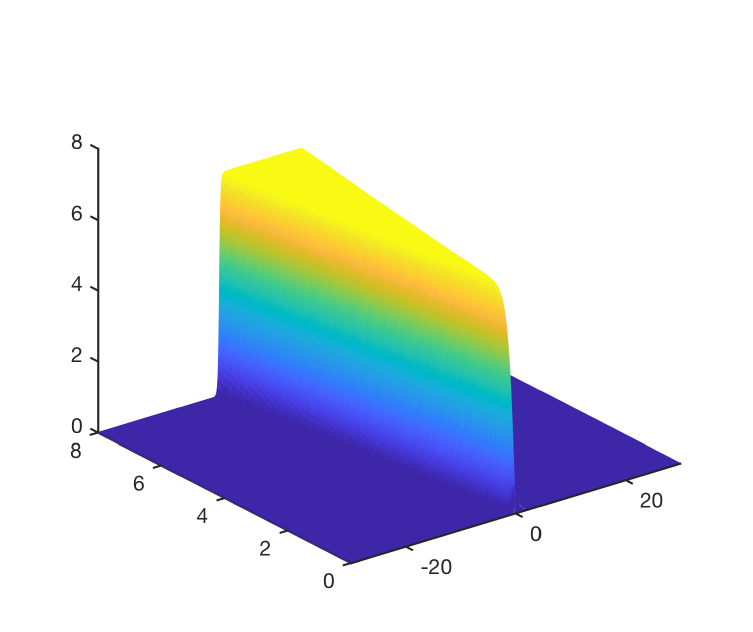}
        }
        \subfloat{
                \centering
                \includegraphics[width=0.49\textwidth]{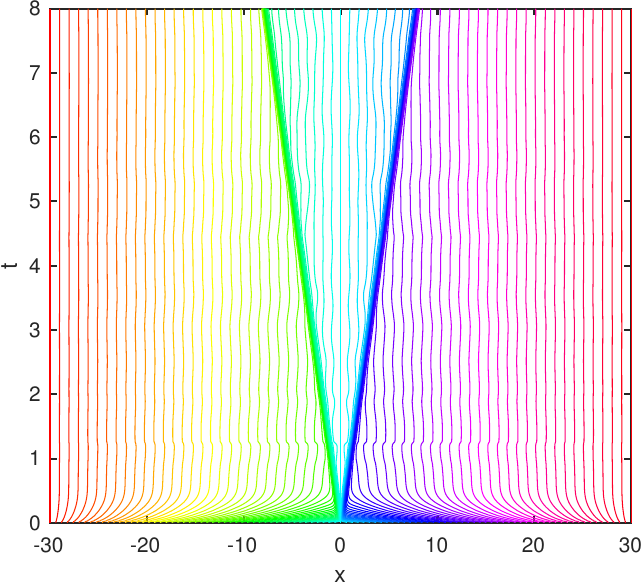}
        }
        \caption{\textit{Left}: Illustration of kink-antikink solution. \textit{Right}: Grid movement - each line represents the path of one grid point in time.}
        \label{fig:SG_fxn}
\end{figure}

\begin{figure}
        \centering
        \subfloat{
                \centering
                \includegraphics[width=0.49\textwidth]{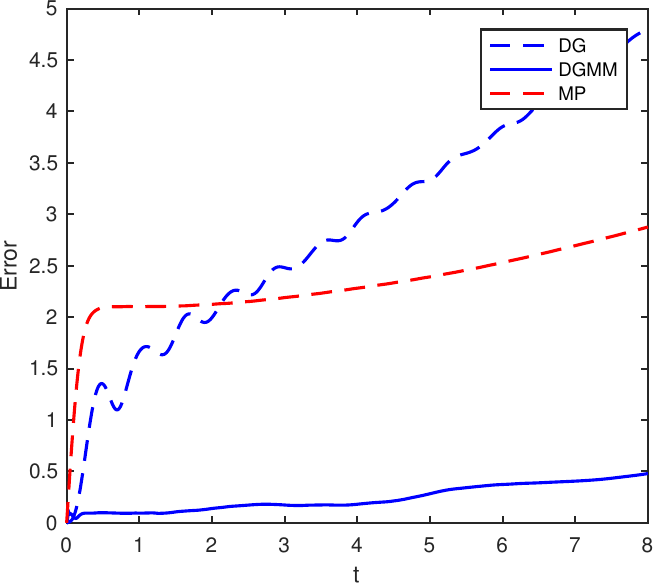}
        }
        \subfloat{
                \centering
                \includegraphics[width=0.49\textwidth]{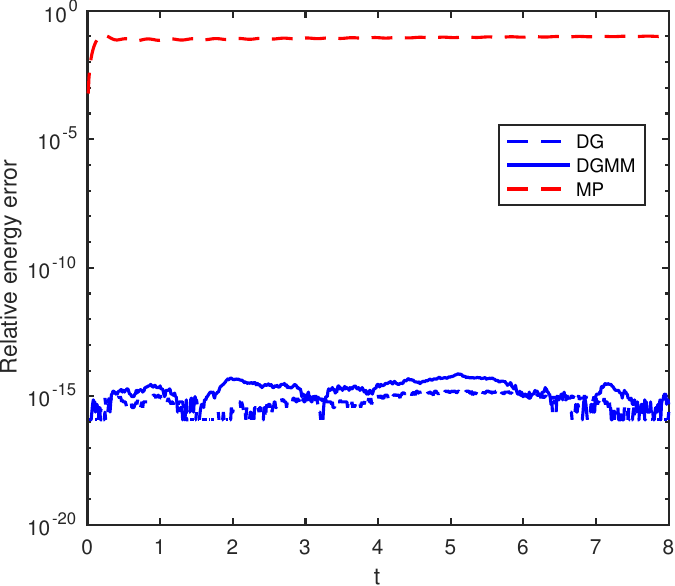}
        }
        \caption{\textit{Left}: $L_2$ error. \textit{Right}: Relative error in $I_{\mathbf{p}}$. \textit{Parameters}: $\Delta t = 0.01$, $M = 300$, $L = 30$, $c = 0.99$.}
        \label{fig:SG_globerr}
\end{figure}

Figure \ref{fig:SG_fxn} illustrates the analytical solution and shows the time evolution of the mesh as obtained with the DGMM method. Note that the grid points cluster along the wave fronts. The left hand side of Figure \ref{fig:SG_globerr} shows the time evolution of the error $E^u_n = ||u^I_n(x) - u(x,t_n)||_{L_2}$, where $u^I_n$ is a linear interpolant created from the pairs $(\mathbf{u}^n, \mathbf{x}^n)$. The right hand side of Figure \ref{fig:SG_globerr} shows the time evolution of the relative error in the discretized energy, $E^I_n = (I_{\mathbf{p}^n}(\mathbf{u}^n) - I_{\mathbf{p}^0}(\mathbf{u}^0))/I_{\mathbf{p}^0}(\mathbf{u}^0)$. We can see that the long-term behaviour of the MP scheme is superior to that of the DG scheme, but when mesh adaptivity is applied, the DGMM scheme is clearly better. Also note that while the DG and DGMM schemes preserve $I_{\mathbf{p}}$ to machine precision, the MP scheme does not.

\begin{figure}
        \centering
        \subfloat{
                \centering
                \includegraphics[width=0.49\textwidth]{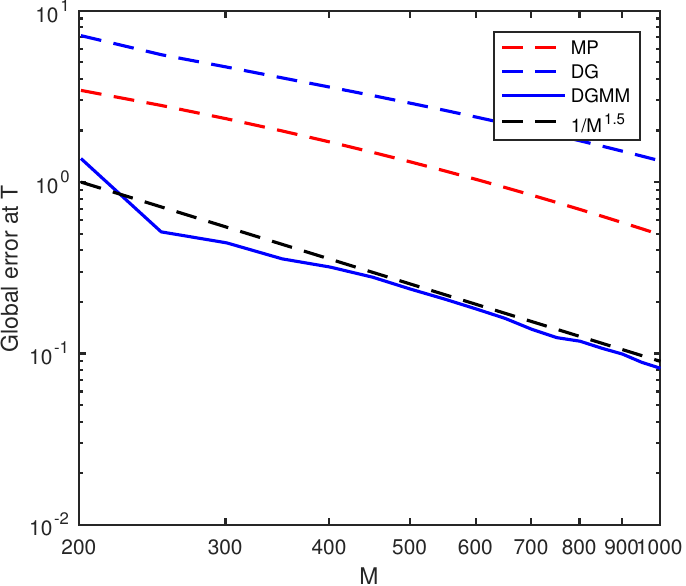}
        }
        \subfloat{
                \centering
                \includegraphics[width=0.49\textwidth]{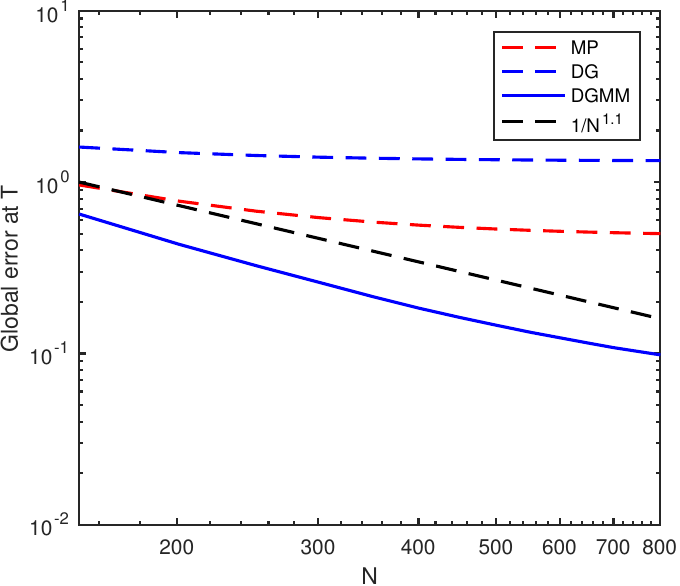}
        }
        \caption{\textit{Left}: Error at $T = 8$ as a function of $M$, with $\Delta t = 0.008$, $c = 0.99$, $L = 30$. \textit{Right}: Error at $T = 8$ as a function of $N = T/\Delta t$, with $M = 1000$, $c = 0.99$, $L = 30$.}
        \label{fig:SG_convrates}
\end{figure}

\begin{figure}
        \centering
        \includegraphics[width=0.49\textwidth]{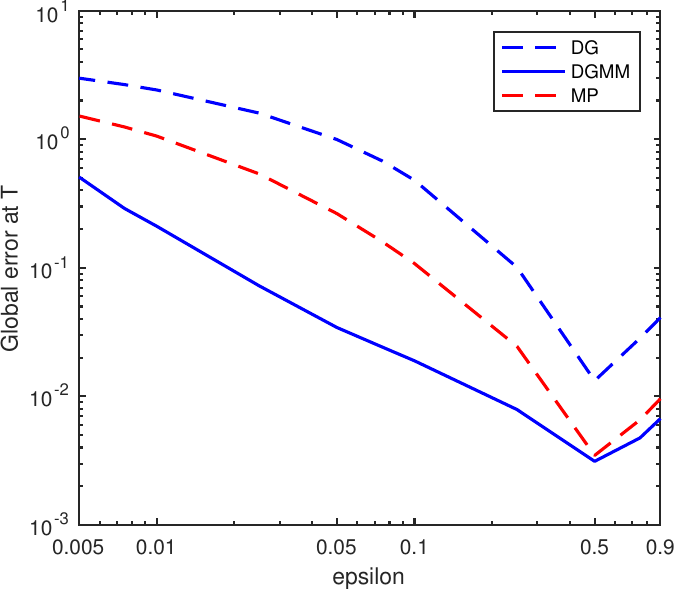}
        \caption{Error at $T = 8$ as a function of $\varepsilon$, with $\Delta t = 0.01$, $M = 600$ and $L = 30$.}
        \label{fig:SG_epsilon}
\end{figure}

Figure \ref{fig:SG_convrates} shows the convergence behaviour of the three schemes with respect to the number of spatial discretization points $M$, and the number of time steps $N$. Note that the DG and MP methods plateau at $N \simeq 400$; this is due to the error stemming from spatial discretization dominating the time discretization error for these methods, while the DGMM scheme has lower spatial discretization error. The convergence order of the DGMM scheme was measured using a first order polynomial fitting of $\log(E_n^u)$ to $\log(M)$ and $\log(N)$. The convergence order with respect to $M$ was calculated as 1.518, and the convergence order with respect to $N$ was measured at 1.121. 

Finally, to illustrate the applicability of the DGMM scheme to harder problems, Figure \ref{fig:SG_epsilon} shows the error at stopping time of the methods as a function of a parameter $\epsilon$ representing the increasing speed of the solitons ($c = 1 - \varepsilon$). From this plot, it is appararent that while the non-adaptive MP scheme is competitive at low speeds, the moving mesh method provides significantly more accuracy as $c \rightarrow 1$.

\subsection{Korteweg--de Vries equation}
The KdV equation is a nonlinear PDE with soliton solutions modelling shallow water surfaces, stated as
\begin{align}
u_t + u_{xxx} + 6u u_x = 0.
\label{eq:KdV}
\end{align}
It has infinitely many first integrals, one of which is the Hamiltonian
\begin{align*}
\mathcal{H}[u] = \int \limits_{\mathbb{R}} \frac{1}{2}u_x^2 - u^3 \mathrm{d}x.
\end{align*}
With this Hamiltonian, we can  write (\ref{eq:KdV}) in the form (\ref{eq:pde_var}) with
$S$ and $\frac{\delta \mathcal{H}}{\delta u}$ as follows:
\begin{align*}
S = \dfrac{\partial}{\partial x} , \quad
\dfrac{\delta \mathcal{H}}{\delta u}[u] =  - u_{xx} - 3u^2.
\end{align*}
We will apply the PUM approach to create a numerical scheme which preserves an approximation to $\mathcal{H}[u]$, splitting $\Omega = [-L,L]$ into $M$ elements $\{[x_i, x_{i+1}]\}_{i=0}^{M-1}$ and using Lagrangian basis functions $\varphi_j$ of arbitrary degree for the trial space. Approximating $u$ by $u^h$ as in section \ref{sec:PUM}, we find 
\begin{align}
\mathcal{H}_{\mathbf{p}}(\mathbf{u}) &= \mathcal{H}[u^h] = \int_{\Omega} \frac{1}{2}(u^h_x)^2 - (u^h)^3 \mathrm{d}x \nonumber \\
&= \frac{1}{2} \sum \limits_{j,k} u_j u_k \int_{\Omega} \varphi_{j,x}\varphi_{k,x} \mathrm{d}x - \sum \limits_{j,k,l}   u_j u_k u_l \int_{\Omega} \varphi_{j}\varphi_{k}\varphi_{l} \mathrm{d}x.
\label{eq:approxH}
\end{align}
The integrals can be evaluated exactly and efficiently by considering elementwise which basis functions are supported on the element before applying Gaussian quadrature to obtain exact evaluations of the polynomial integrals. We define
\begin{align*}
D_{i j k} = \int_{\Omega} \varphi_{i}\varphi_{j}\varphi_{k} \mathrm{d}x \quad \mathrm{ and } \quad E_{i j} =  \int_{\Omega} \varphi_{i,x}\varphi_{j,x} \mathrm{d}x.
\end{align*}
The matrices $A$ and $B$ with
\begin{align*}
A_{ij} =  \int_{\Omega} \varphi_{i}\varphi_{j} \mathrm{d}x \quad \mathrm{ and } \quad  B_{ji} = \int_{\Omega} \varphi_{i}\varphi_{j,x} \mathrm{d}x 
\end{align*} 
are formed in the same manner. Note that $B$ is in this case independent of $\mathbf{u}$. Applying the AVF method yields the discrete gradient 
\begin{align*}
\overline{\nabla}\mathcal{H}_{\mathbf{p}}(\mathbf{u}^n,\mathbf{u}^{n+1}) = \int \limits_{0}^{1} \nabla_\mathbf{u}\mathcal{H}_{\mathbf{p}}(\xi \mathbf{u}^{n} + (1-\xi)\mathbf{u}^{n+1}) \mathrm{d}\xi
\end{align*}
such that, with the convention of summation over repeated indices,
\begin{align*}
(\overline{\nabla}\mathcal{H}_{\mathbf{p}})_i  = \dfrac{1}{2} E_{ij}(u_j^n + u_j^{n+1}) -   D_{ijk}(u_j^n (u_k^n  + \frac{1}{2} u_k^{n+1}) + u_j^{n+1}(\frac{1}{2} u_k^n  + u_k^{n+1})).
\end{align*}
This gives us all the required terms for forming the system (\ref{eq:PUM_ODE}) and applying the discrete gradient method to it. During testing, the $\varphi_j$ were chosen as piecewise linear polynomials. The exact solution considered is of the form
\begin{align}
u(x,t) = \dfrac{c}{2} \mathrm{sech}^{2}\left( \dfrac{\sqrt{c}}{2} (x-ct) \right),
\label{eq:KdV_exact}
\end{align}
which is a right-moving soliton with $c$ as the propagation speed, chosen as $c=6$ in the numerical tests. We have considered periodic boundary conditions 
on a domain $\left[-L,L\right] \times [0,T]$, with $L = 100$ in all the following results.

Our discrete gradient method on a moving mesh (DGMM) is compared to the same method on a static, equidistributed mesh (DG), and the implicit midpoint method on static (MP) and moving mesh (MPMM). The spatial discretization is performed the same way in all cases. Figure \ref{fig:uplots} shows an example of exact and numerical solutions at $t = 15$. Note that the peak in the exact solution will be located at $x = c t$.

\begin{figure}[ht]
                \centering
                \includegraphics[width=0.9\textwidth]{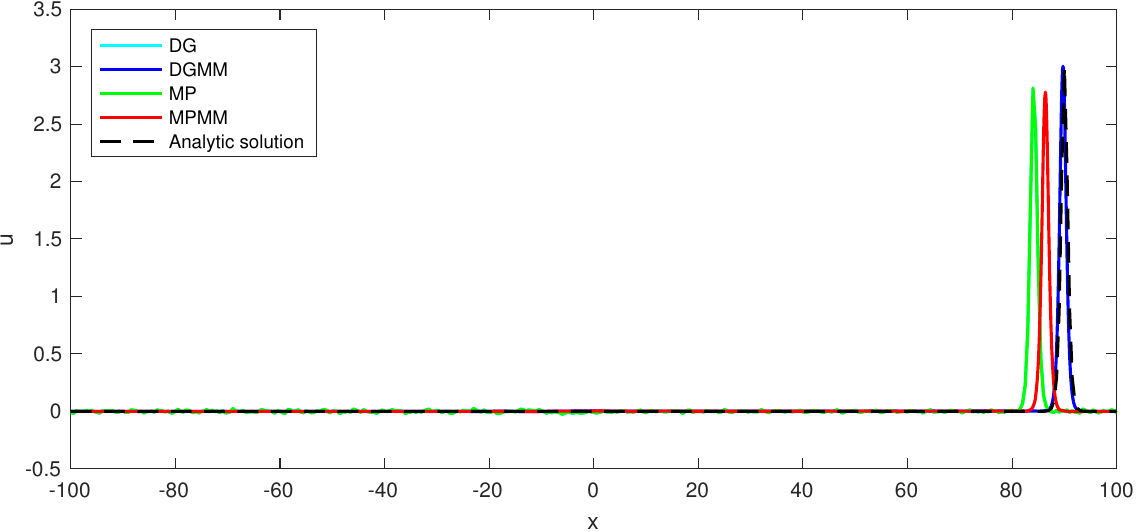}
        \caption{Solutions at T = 15. $\Delta t = 0.01$, $M = 400$. MP and DG are almost indistinguishable.}
        \label{fig:uplots}
\end{figure}

\noindent To evaluate the numerical solution, it is reasonable to look at the distance error
\begin{align*}
E^{\text{dist}}_n = ct_n - x^*,
\end{align*}
where $x^* = \text{arg} \max \limits_{x} u_h(x,t_n) $, i.e. the location of the peak in the numerical solution. Another measure of the error is the shape error
\begin{align*}
E^{\text{shape}}_n = \left|\left|u_h(x,t_n) - u \left(x,\frac{x^*}{c} \right) \right| \right|,
\end{align*}
where the peak of the exact solution is translated to match the peak of the numerical solution, and the shapes of the solitons are compared.

Figure \ref{fig:KdV_energy} confirms that the DG and DGMM methods preserve the approximated Hamiltonian (\ref{eq:approxH}), while it is also worth noting that in the case of the midpoint method, the error in this conserved quantity is much larger on a moving than on a static mesh. Similar behaviour is also observed for a moving-mesh method for the regularized long wave equation in the recent paper \cite{Lu_Huang_Qiu}, where it is concluded that a moving mesh method with a conservative property would be an interesting research topic. Figure \ref{fig:KdV_overtime}, where the phase and shape errors are plotted up to $T=15$, is an example of how the DGMM method performs comparatively better with increasing time.

\begin{figure}
        \centering
        \subfloat{
                \centering
                \includegraphics[width=0.49\textwidth]{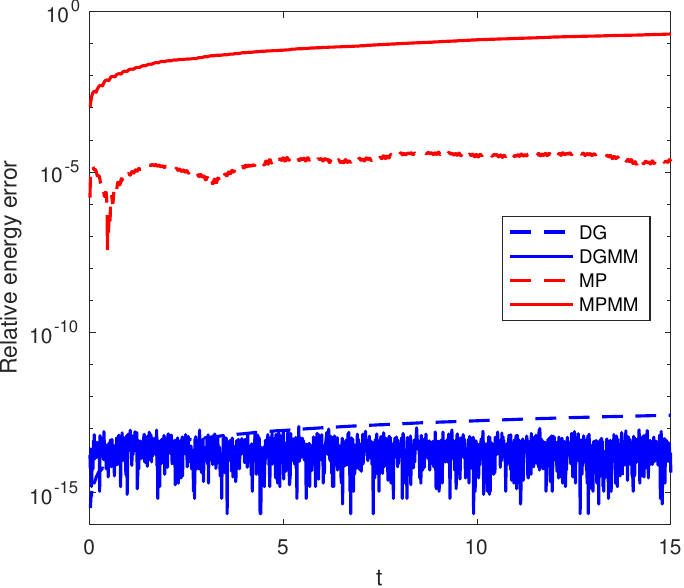}
        }
        \caption{Relative error in the Hamiltonian plotted as a function of time $t \in \left[0,15\right]$. $\Delta t = 0.01$, $M = 400$.}
        \label{fig:KdV_energy}
\end{figure}

\begin{figure}
        \centering
        \subfloat{
                \centering
                \includegraphics[width=0.49\textwidth]{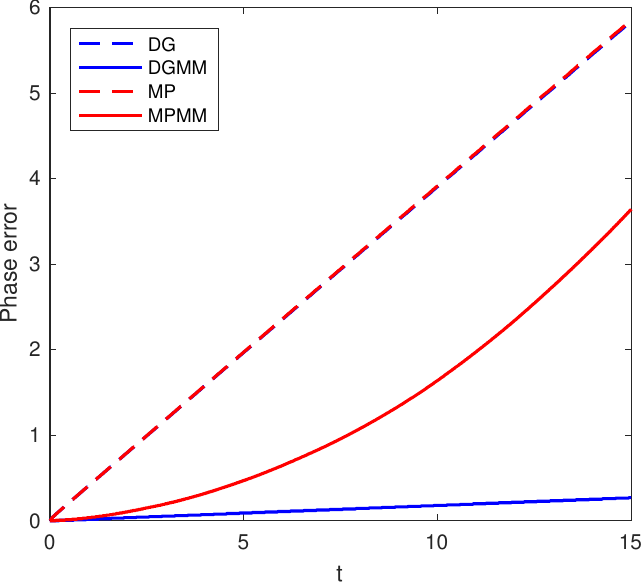}
        }
        \subfloat{
                \centering
                \includegraphics[width=0.49\textwidth]{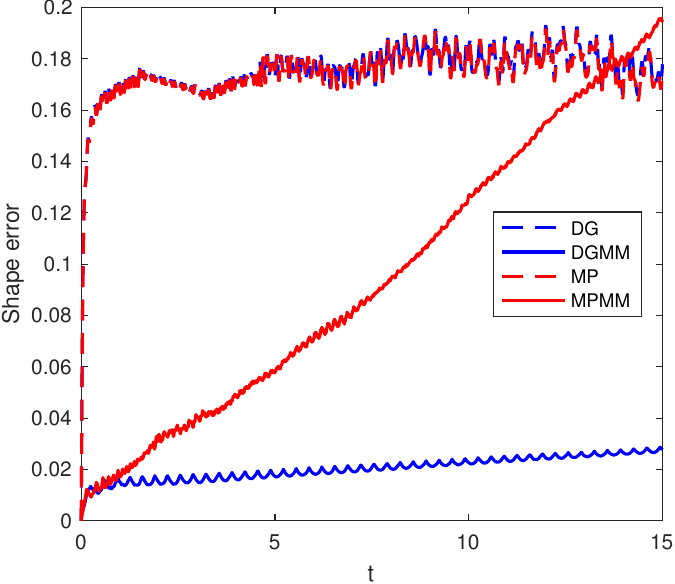}
        }
        \caption{Phase error (left) and shape error (right) as a function of time. $\Delta t = 0.01$, $M = 400$.}
        \label{fig:KdV_overtime}
\end{figure}
In figures \ref{fig:KdV_byM} and \ref{fig:KdV_byN} we present the phase and shape errors for the different methods as a function of the number of elements $M$ and the number of time steps $N$, respectively. 
Reference lines are included to give an indication of the rate of convergence. We also calculated this for the DGMM method by first degree polynomial fitting of the error curve, giving a convergence order of $1.135$ for the phase error and $2.311$ for the shape error as a function of $M$. As a function of $N$, we get a convergence order of $1.492$ for the phase error, and $1.609$ for the shape error (the latter measured up to $N = 320$, where it flattens out). 
We observe that the DGMM scheme performs especially well, compared to the other three schemes, for a coarse spatial discretization compared to the discretization in time. In figure \ref{fig:KdV_byc}, the phase and shape errors are plotted as a function of the parameter $c$ in the exact solution (\ref{eq:KdV_exact}), where we note that $\frac{c}{2}$ is the height of the wave; increasing $c$ leads to sharper peaks and thus a harder numerical problem. As expected, the advantages of the DGMM method is less evident for small $c$, but we observe that the DGMM method outperforms the static grid midpoint method already when $c=2$.

\begin{figure}
        \centering
        \subfloat{
                \centering
                \includegraphics[width=0.49\textwidth]{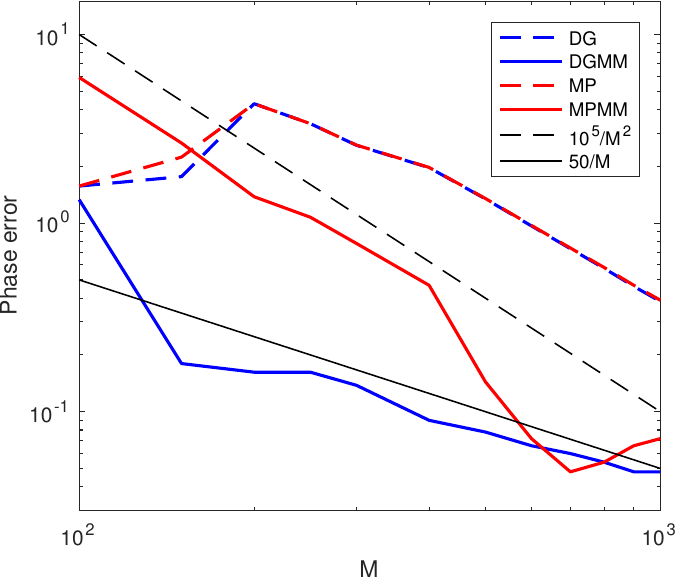}
        }
        \subfloat{
                \centering
                \includegraphics[width=0.49\textwidth]{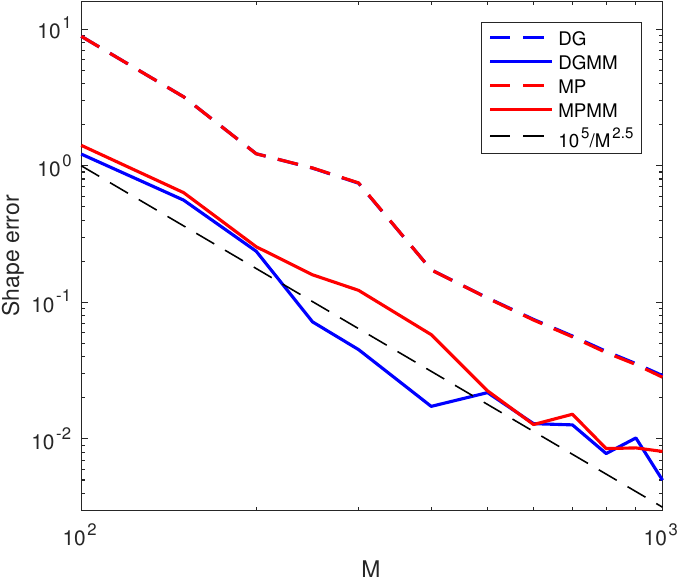}
        }
        \caption{Phase error (left) and shape error (right) as a function of the number of elements $M$, at time $T = 5$. $\Delta t = 0.01$.}
        \label{fig:KdV_byM}
\end{figure}

\begin{figure}
        \centering
        \subfloat{
                \centering
                \includegraphics[width=0.49\textwidth]{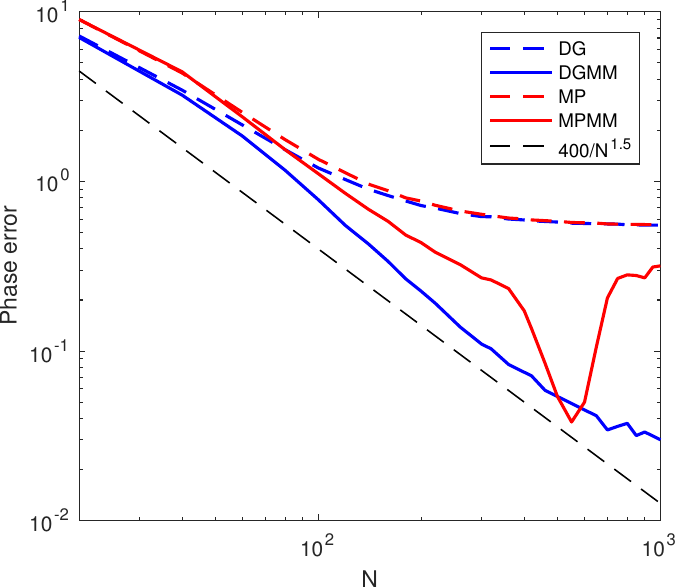}
        }
        \subfloat{
                \centering
                \includegraphics[width=0.49\textwidth]{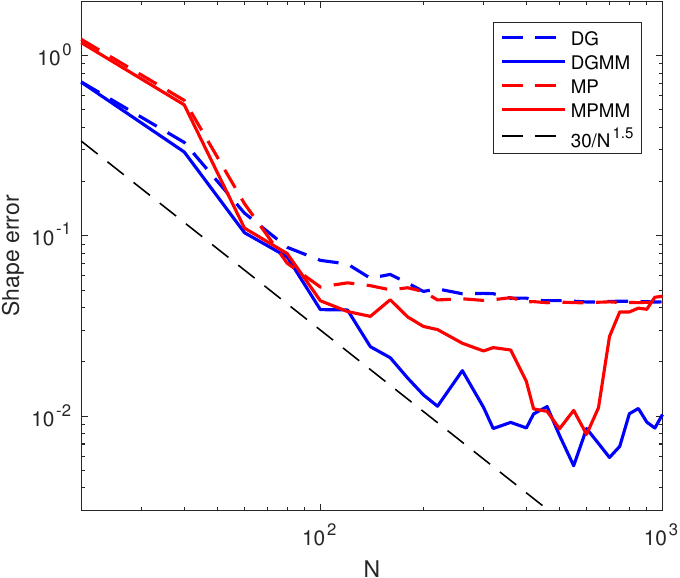}
        }
        \caption{Phase error (left) and shape error (right) at time $T = 5$, as a function of the number of time steps $N = T/\Delta t$. $M = 800$.}
        \label{fig:KdV_byN}
\end{figure}

\begin{figure}
        \centering
        \subfloat{
                \centering
                \includegraphics[width=0.49\textwidth]{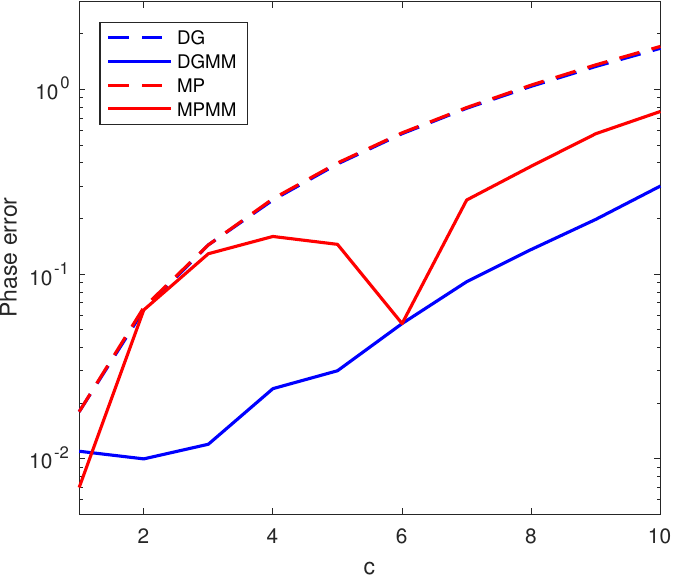}
        }
        \subfloat{
                \centering
                \includegraphics[width=0.49\textwidth]{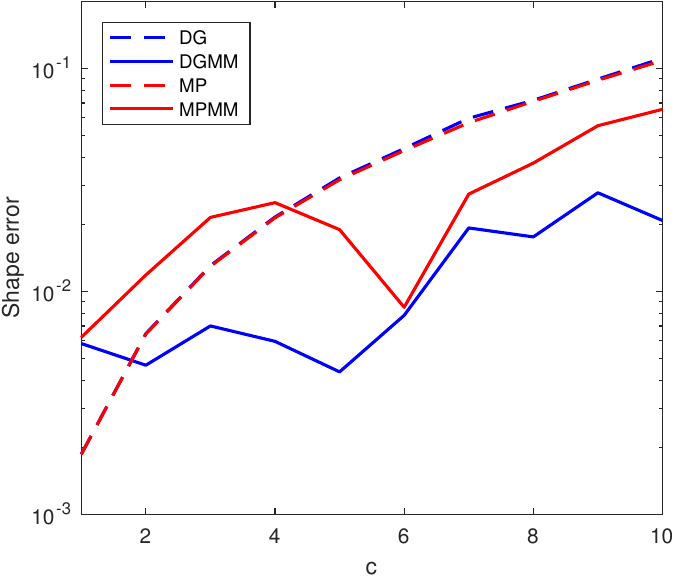}
        }
        \caption{Phase error (left) and shape error (right) as a function of $c$ in the exact solution (\ref{eq:KdV_exact}), at time $t = 5$. $\Delta t = 0.01$, $M = 800$.}
        \label{fig:KdV_byc}
\end{figure}

\subsection{Execution time}
The code used is not optimized, so any quantitative comparison to standard methods has not been performed; it is still possible to make some qualitative observations. Adding adaptivity increases time per iteration slightly since the systems become more complicated, especially in the case of the PUM approach where the matrices $A$ and $B$ need to be recalculated, at each time step when adaptivity is used. This increases runtime somewhat when compared to fixed grid methods. However, adaptivity allows for using fewer degrees of freedom, and so decreases the degrees of freedom needed for a given level of accuracy. This accuracy gain is more pronounced the harder the problem is (steeper wave fronts etc.), and so it stands to reason that there will be situations where adaptive energy preserving methods will outperform non-adaptive and/or non-preserving methods. This is in accordance with what we have observed from our not optimized experiments.

\section{Conclusion}

In this paper, we have introduced a general framework for producing adaptive first integral preserving methods for partial differential equations. This is done by first providing two means of producing first integral preserving methods on arbitrary fixed grids, then showing how to extend these methods to allow for adaptivity while preserving the first integral. Numerical testing shows that moving mesh methods coupled with discrete gradient methods provide good solvers for the sine-Gordon and Korteweg--de Vries equations. It would be of interest to apply the method to higher-dimensional PDEs with a more challenging geometry, preferably using the PUM approach, to investigate its accuracy as compared to conventional methods, and to test whether $h$- and/or $p$-refinement provides a notable improvement. It may also prove fruitful to explore the ideas presented in \cite{Bauer_Joshi_Modin} to make the transfer operations between sets of discretization parameters in a more natural setting than simply interpolating, as suggested in section \ref{sect:fiber}. Furthermore, analysis of the methods considered here could provide important insight into e.g. stability, consistency and convergence order. 

\bibliography{EP,tb}
\bibliographystyle{ieeetr}

\end{document}